\documentclass[a4paper,10pt,reqno, english]{amsart}

\usepackage{amsmath,amssymb,amscd,amsthm,amsfonts}
\usepackage{graphicx,subfigure}
\usepackage{hyperref}
\usepackage{dsfont}
\usepackage[nobysame, alphabetic]{amsrefs}
\usepackage{tikz}
\usepackage[capitalise]{cleveref}

\newtheorem{theorem}{Theorem}
\newtheorem{lemma}{Lemma}
\newtheorem{claim}{Claim}
\newtheorem{conjecture}{Conjecture}
\newtheorem{problem}{Problem}
\newtheorem{definition}{Definition}

\newcommand{\Z}{\mathbb{Z}}

\newcommand{\ff}{\mathcal{F}}

\def\zz{\mathds{Z}}
\def\rr{\mathds{R}}
\def\rp{\mathds{R}\mathds{P}}

\DeclareMathOperator{\conv}{conv}

\title{Tverberg cores and Kalai's
cascade conjecture}

\hypersetup{
  pdftitle={Tverberg cores and Kalai's cascade conjecture},
  pdfauthor={Pablo Soberon}
}

\author[Sober\'on]{Pablo Sober\'on}\address{Baruch College, City University of New York, One Bernard Baruch Way, New York, NY 10010, United States} 
\email{psoberon@gc.cuny.edu}

\thanks{
The research of P. Sober\'on is supported by NSF CAREER grant DMS-237324 and a PSC-CUNY Trad B award.}

\keywords{Tverberg theorem, topological Tverberg theorem, Kalai's cascade conjecture, equivariant topology, characteristic classes.}

\subjclass[2020]{Primary 52A37; Secondary 55M20, 55R40, 55R91.}

\begin{document}

\maketitle

\begin{abstract}
We study topological analogues of Kalai's cascade conjecture. Given a continuous map from an \(n\)-simplex to \(\mathbb R^d\), let \(T_r(f)\) be the set of points contained in the images of \(r\) pairwise disjoint faces. We prove that if \(r\) is a prime power and \(\dim T_r(f)\le k\), then there exists a point that remains an \(r\)-Tverberg point after any \(t\) vertices are deleted, provided \(n=(r-1)(d+1)+t(k+1)\).
For \(t=1\), this gives a topological analogue of a standard consequence of Kalai's cascade conjecture. We also confirm the cascade conjecture for finite point sets whose Radon set is \(0\)-dimensional.
\end{abstract}

%\tableofcontents

\section{Introduction}

Tverberg's theorem is a fundamental result in combinatorial geometry concerning the structure of finite sets of points in $\rr^d$.  For sufficiently large finite sets in $\rr^d$, it guarantees the existence of partitions into parts whose convex hulls intersect.  There are numerous generalizations of Tverberg's theorem, and its affine and topological variants remain central topics in combinatorial geometry \cites{Barany2018, DeLoera2019}.  

\begin{theorem}[Tverberg 1966 \cite{Tverberg:1966tb}]
    Let $r,d$ be positive integers.  For any set of $(r-1)(d+1)+1$ points in $\rr^d$ there exists a partition into $r$ parts whose convex hulls intersect.
\end{theorem}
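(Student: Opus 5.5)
We prove Tverberg's theorem through Sarkaria's tensor lifting, which reduces it to the colorful Carathéodory theorem (B\'ar\'any's theorem), itself a short consequence of a distance-minimization argument. Set $N=(r-1)(d+1)$ and let $x_0,\dots,x_N\in\rr^d$ be the points. Lift each point to $\hat x_i=(x_i,1)\in\rr^{d+1}$. Then a partition $I_1,\dots,I_r$ has intersecting convex hulls if and only if there are nonnegative coefficients $\lambda_i$ with $\sum_{i\in I_j}\lambda_i\hat x_i$ independent of $j$ and nonzero. The last coordinate forces the coefficient sums in each part to agree, and normalizing then gives a common point of the convex hulls.

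Next choose vectors $v_1,\dots,v_r\in\rr^{r-1}$ with $\sum_j v_j=0$ and every $r-1$ of them linearly independent, for example the vertices of a simplex centered at the origin. The only linear relations among them are multiples of $\sum_j v_j=0$. For each point $i$, form the color class
\[
C_i=\{\hat x_i\otimes v_j : 1\le j\le r\}\subset \rr^{d+1}\otimes\rr^{r-1}\cong\rr^{N}.
\]
Since $\sum_j \hat x_i\otimes v_j=0$, the origin lies in $\conv C_i$ for every $i$. This gives $N+1$ color classes in $\rr^N$, each containing $0$ in its convex hull.

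Now apply the colorful Carathéodory theorem. It yields one vector $\hat x_i\otimes v_{j(i)}$ from each class such that $0$ lies in their convex hull, say $\sum_i\mu_i\,\hat x_i\otimes v_{j(i)}=0$ with $\mu_i\ge0$ and $\sum\mu_i=1$. Define $I_j=\{i: j(i)=j\}$ and $y_j=\sum_{i\in I_j}\mu_i\hat x_i$. The relation reads $\sum_j y_j\otimes v_j=0$. Using the fact that the only relations among the $v_j$ are multiples of $\sum v_j$, this forces all $y_j$ to be equal. Concretely, pair with an arbitrary linear functional on $\rr^{d+1}$: its values on the $y_j$ give a relation $\sum_j c_jv_j=0$, so all $c_j$ are equal. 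The common vector is nonzero because the last coordinates sum to $\sum\mu_i=1$, so each part has last-coordinate sum $1/r$. Each $I_j$ is then nonempty, and by the first paragraph the $I_j$ form the desired Tverberg partition.

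The main step to verify is the colorful Carathéodory theorem itself, so I would include its proof. Among all colorful choices, take one whose convex hull is closest to $0$, and suppose the distance is positive, attained at a point $p$. Then $p$ lies on a face spanned by at most $N$ of the chosen vectors, so some color's vector can be dropped. Replace it by a vector of the same color on the far side of the hyperplane through $p$ orthogonal to $p$. Such a vector exists because $0\in\conv C_i$. The replacement strictly decreases the distance, contradicting minimality. The remaining checks, namely that the tensor relation forces the $y_j$ to be equal and the dimension count $(d+1)(r-1)=N$, are routine.
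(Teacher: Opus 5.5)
Your proof is correct. It is the Sarkaria--Onn argument, and your proof of B\'ar\'any's colorful Carath\'eodory theorem is the standard one: take the closest colorful simplex, apply Carath\'eodory inside the supporting hyperplane $\{x:\langle x,p\rangle=\|p\|^2\}$ to drop one color, then swap in a vector $c'$ of that color with $\langle c',p\rangle\le 0$, which exists because $0\in\conv C_i$.

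The paper itself does not prove this statement; it quotes it from Tverberg as background. The relevant comparison is with the paper's remark that the method of \cref{sec:main-proof} with $n=(r-1)(d+1)$ proves the topological Tverberg theorem, and hence the affine one by taking $f$ linear.

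For linear $f$, the paper's test map $\Psi$ is literally your tensor. There $y_g=\sum_{i\in I_g}t_i\hat x_i$, so $\Psi\bigl(\sum_i t_ig_i\bigr)=\sum_i t_i\,\hat x_i\otimes v_{g_i}$. A colorful choice $i\mapsto j(i)$ with weights $\mu_i$ is therefore exactly a zero of $\Psi$ on the join $G^{*(n+1)}$.

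The two routes differ in how that zero is produced:
\begin{itemize}
\item The paper shows that the Euler class (or top Stiefel--Whitney class) of $K\times_G W_G^{\oplus(d+1)}$ is nonzero on $M=K/G$. This handles arbitrary continuous $f$, but it needs the free action of $G=(\mathbb{Z}_q)^m$, i.e.\ $r$ a prime power. For continuous $f$ that restriction cannot be removed, by Frick's counterexamples.
\item You use convexity via colorful Carath\'eodory. This needs $f$ affine, but it is elementary and works for every $r$.
\end{itemize}
So for the statement as given, for all positive $r$, your route is the one that actually suffices; the paper's machinery alone would only cover prime powers.
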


We call a partition as above a Tverberg $r$-partition of the set.  A remarkable feature of this result is that it is not purely affine \cite{Barany1981}.  For prime-power $r$ the same conclusion holds if we replace the set of points and their convex hulls by the images of the faces of a high-dimensional simplex that is mapped continuously to $\rr^d$.

%One  of the most important generalizations of Tverberg's theorem is the topological version.

\begin{theorem}[\"Ozaydin 1987, Volovikov 1996 \cites{Oza87, Volovikov:1996up}]
    Let $r,d$ be positive integers, $n=(r-1)(d+1)$, and $\Delta^n$ be an $n$-dimensional simplex.  Assume $r$ is a prime power.  For any continuous map $f:\Delta^n\to \rr^d$ there exist $r$ pairwise disjoint faces of $\Delta^n$ whose images intersect.  
\end{theorem}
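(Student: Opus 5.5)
The plan is the standard configuration space / test map scheme, with an equivariant obstruction argument in the style of Özaydin and Volovikov. Write $r=p^m$ and $N=n+1=(r-1)(d+1)+1$. Suppose, for contradiction, that no $r$ pairwise disjoint faces of $\Delta^n$ have a common point in their images.

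First I build the configuration space. I take the $r$-fold $2$-wise deleted join $(\Delta^n)^{*r}_{\Delta(2)}$. Its points are formal convex combinations $\lambda_1x_1\oplus\cdots\oplus\lambda_rx_r$, where the $x_i$ lie in pairwise disjoint faces. This space is combinatorially the join of $N$ copies of the discrete $r$-point set $[r]$, that is $[r]^{*N}$. Hence it is an $(N-2)$-connected simplicial complex of dimension $N-1=n$. The symmetric group $S_r$ acts on it, and I restrict the action to the elementary abelian subgroup $G=(\zz/p)^m$ acting regularly on $[r]$.

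Next I define the test map $F\colon [r]^{*N}\to W_r^{\oplus(d+1)}$ by
\[
F=\big(\lambda_i-\tfrac1r\big)_{i}\ \oplus\ \big(\lambda_if(x_i)-\tfrac1r\textstyle\sum_j\lambda_jf(x_j)\big)_{i},
\]
where $W_r=\{y\in\rr^r:\sum_i y_i=0\}$. This map is $G$-equivariant. A zero of $F$ forces all $\lambda_i=1/r$ and all $f(x_i)$ equal, which is exactly a Tverberg configuration. Under our assumption $F$ misses the origin. Normalizing, it gives a $G$-map
\[
[r]^{*N}\to S(W_r^{\oplus(d+1)}),
\]
whose target is a sphere of dimension $(r-1)(d+1)-1=N-2$.

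The main obstacle is ruling out such a $G$-map. The group $G$ acts freely on $[r]^{*N}$. On the sphere $S(W_r^{\oplus(d+1)})$, however, $G$ acts without fixed points but not freely when $m>1$, so Dold's theorem does not apply directly. I would use Volovikov's theorem instead. It says that if $G=(\zz/p)^m$ acts without fixed points on an $\mathbb F_p$-cohomology sphere of dimension $N-2$, then it admits no $G$-map from an $(N-2)$-connected free $G$-space. The only thing left to check for this theorem is the fixed-point condition. The $G$-fixed subspace of $W_r$ is $\{y\in W_r: y \text{ constant}\}=0$, because $G$ is transitive on the coordinates, so the sphere has no fixed points. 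To prove Volovikov's theorem itself, I would compare Serre spectral sequences of the Borel constructions $EG\times_G X\to BG$. For the free, $(N-2)$-connected source, $H^*_G$ is the cohomology of the orbit space and vanishes above dimension $n=N-1$. For the fixed-point-free sphere of dimension $N-2$, the localization theorem shows that the transgression of the fundamental class must be a nonzero element of $H^{N-1}(BG;\mathbb F_p)$. Naturality of this Euler class then gives a nonzero class pulled back into degree $N-1$, and I would push this to a contradiction in degree $N$ using the fact that the Euler class is not nilpotent in $H^*(BG)$ after inverting it. Alternatively, one can avoid general $m$: for $r=p$ prime, Dold's theorem with the free $\zz/p$-action finishes the proof immediately. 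Only the prime-power case needs the Volovikov/Özaydin cohomological-index argument, and there I would invoke the ideal-valued index $\ker\big(H^*(BG)\to H^*_G(X)\big)$. That index contains the Euler class of the representation for the sphere, but it is zero in degrees $\le N-1$ for the free $(N-2)$-connected source, so the $G$-map cannot exist.
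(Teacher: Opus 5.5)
Your proposal is correct and is essentially the argument the paper has in mind. The paper only cites this theorem, but remarks that the method of \cref{sec:main-proof} with $n=(r-1)(d+1)$ reproves it, using the same configuration space $[r]^{*N}=G^{*(n+1)}$ and the same test map. In both arguments the obstruction is the Euler class (the top Stiefel--Whitney class when $p=2$) of $W_r^{\oplus(d+1)}$, which is nonzero in $H^{n}(BG;\mathbb F_p)$ and survives to $H^n(K/G)$ because $K$ is $(n-1)$-connected. The paper phrases this as ``the induced section of $K\times_G W_G^{\oplus(d+1)}\to K/G$ must vanish somewhere.'' You phrase it as a Fadell--Husseini index (Volovikov) argument, and you get $e\neq 0$ from the localization theorem instead of from the explicit product formula.

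One small correction: drop the sentence about pushing to a contradiction in degree $N$ via non-nilpotence. It is unnecessary and does not make sense as written. The contradiction already occurs in degree $N-1=n$, exactly as your final index sentence says.
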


The condition that $r$ is a prime power is necessary, as shown by Frick building on the work of Mabillard and Wagner \cite{Frick:2015wp, Mabillard2014}.  The topological Tverberg theorem and its variations have been a key factor in the development of topological combinatorics \cites{Blagojevic:2017bl, Karasev:2009hq, Zivaljevic2017, Skopenkov2018}.

A natural way to look beyond the existence of a Tverberg partition is to study the set of all possible points of overlap.  For a finite set $S \subset \rr^d$ and a positive integer $r$, denote by $T_r(S)$ the set of \(r\)-Tverberg points of \(S\).  Formally,
\[
T_r(S) = \left\{ p \in \rr^d:
\begin{tabular}{c}
\mbox{There exists a partition of $S$ into $r$ parts}  \\
\mbox{$A_1,\dots, A_r$ with $p \in \conv(A_i)$ for all $i=1,\dots,r$}
\end{tabular}  \right\}.
\]

Kalai's cascade conjecture asks for more than the existence of single points of overlap, it seeks to establish structural properties of all Tverberg partitions.  Kalai conjectured a global trade-off among the dimensions of the successive Tverberg sets of a finite point set. Roughly speaking, if the set of \(r\)-Tverberg points is small for some values of $r$, then additional Tverberg structure should appear elsewhere.  

This is known as Kalai's ``cascade'' conjecture, which is now a long-standing open problem. The results of this paper show that this dimension hypothesis has robust consequences even for continuous maps of simplices.   With the convention $ \dim (\emptyset)=-1$, the cascade conjecture says the following:
%The results of this paper are motivated by Kalai's longstanding ``cascade conjecture'', related to the dimension of the Tverberg points of a set.  Our results show that the conditions for the cascade conjecture indeed have structural implications for the set of Tverberg partitions even in the topological setting.  
%  If we consider $-1 = \dim (\emptyset)$, then Kalai's conjecture says the following:

\begin{conjecture}[Kalai 1974 (see e.g., \cites{Kalai2000, Barany2022})]\label{conj:cascade}
Let $S \subset \rr^d$ be a finite set.  Then, $\sum_{r=1}^{|S|} \dim(T_r(S)) \ge 0.$
\end{conjecture}

The only case that has been solved is $d \le 2$ \cite{KadariMSc} and for sets in sufficiently general position \cite{Reay1979}.  Schnider recently proved a variation of the cascade conjecture if we replace $T_r(S)$ by the set of points at Tukey depth greater than or equal to $r$ \cite{Schnider2023}.  A direct consequence of the cascade conjecture is the following result, which is also unconfirmed except for $r=1$, in which case it is exactly Radon's theorem \cite{Radon:1921vh}, and for $d\le 2$, which follows from Kadari's work.

\begin{conjecture}\label{conj:kalai-simplified}
    Let $d, r$ be positive integers and $k$ a non-negative integer.  If $S$ is a set of $(r-1)(d+1)+k+2$ points in $\rr^d$ and $\dim(T_{r}(S)) \le k$, then $S$ has at least one Tverberg $(r+1)$-partition.
\end{conjecture}

Setting $k=d$ in \cref{conj:kalai-simplified} recovers Tverberg's theorem.

\cref{conj:kalai-simplified} remains open even for small values of \(r\) and \(d\).  The results of this
paper show that the same dimension hypothesis forces a robust common
Tverberg point: after replacing the desired \((r+1)\)-Tverberg partition by the
non-emptiness of the \(r\)-Tverberg core (defined below), the conclusion admits a topological
prime-power analogue.

We define the $t$-th Tverberg $r$-core of a set $S$ as follows.
\[
C^t_r(S) = \bigcap_{S' \in \binom{S}{t}}T_r(S\setminus S').
\]
In other words, $p$ remains a Tverberg point even if we remove any $t$ points of $S$.  A critical case is $t=1$, as our results for $t=1$ are the closest to \cref{conj:kalai-simplified}.  Notice that $T_{r+t}(S) \subset C^t_r(S) \subset T_{r}(S)$.  We can interpret the condition $p \in C^t_r(f)$ as a robust version of $p \in T_r(f)$.  We first extend these definitions to the topological setting.

\begin{definition}
    Let $r,d,n,t$ be positive integers and $\Delta^n$ be an $n$-dimensional simplex.  For a continuous function $f: \Delta^n \to \rr^d$ we define $C^t_r(f)$, the $t$-th Tverberg $r$-core of $f$ as the set of points $p \in \rr^d$ such that for each face $F$ of codimension $t$ of $\Delta^n$, there exist $r$ pairwise disjoint faces of $F$ whose images all contain $p$.
\end{definition}

If $f$ is a linear map, then $ C^t_r(f)=C^t_r(S)$, where $S \subset \rr^d$ is the image of the vertices of $\Delta^n$.  We can also extend the definition of Tverberg points to continuous maps.  Given $f: \Delta^n \to \rr^d$, we define

\[
T_r(f) = \left\{ p \in \rr^d:
\begin{tabular}{c}
\mbox{There exist $r$ pairwise disjoint faces $F_1,\dots,F_r$ of $\Delta^n$}  \\
\mbox{such that $p \in f(F_i)$ for all $i=1,\dots,r$}
\end{tabular}  \right\}.
\]

%The topological Tverberg theorem states that $T_r(f) \neq \emptyset$ for continuous functions $f: \Delta^n \to \rr^d$ with $n=(r-1)(d+1)$ as long as $r$ is a prime power \cites{Oza87, Volovikov:1996up}.  The condition on $r$ is necessary \cite{Frick:2015wp}.  The topological Tverberg theorem and its variations have motivated the development of a wide range of 

We present two main results.  The first is the following topological extension of \cref{conj:kalai-simplified}.  Throughout this manuscript, \(\dim\) denotes Lebesgue covering dimension. Thus, for a
compact metric space \(Y\), the condition \(\dim Y\le k\) means that every
finite open cover of \(Y\) has a finite open refinement of order at most
\(k+1\).

\begin{theorem}\label{thm:main}
    Let $r,d,t$ be positive integers such that $r=q^m$ for some positive integer $m$ and a prime number $q$ and let $k$ be a non-negative integer.  Let $n=(r-1)(d+1)+t(k+1)$, $\Delta^n$ be an $n$-dimensional simplex, and $f:\Delta^n \to \rr^d$ be a continuous map.  If $\dim (T_r(f)) \le k$, then $C^t_r(f) \neq \emptyset$.
\end{theorem}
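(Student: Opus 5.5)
Suppose for contradiction that $C^t_r(f)=\emptyset$. Then for every $p\in T_r(f)$ there is a codimension-$t$ face $F$ of $\Delta^n$ (equivalently, a set $S'$ of $t$ vertices) such that $p\notin T_r(f|_F)$. For a fixed set $S'$ of $t$ vertices, the set $U_{S'}=T_r(f)\setminus T_r(f|_{\Delta^n\setminus S'})$ is relatively open in $T_r(f)$. Indeed, $T_r(f|_F)$ is compact, being a finite union of intersections of images of compact faces. Hence the sets $U_{S'}$ form a finite open cover of the compact metric space $T_r(f)$. Since $\dim T_r(f)\le k$, we can refine this to a finite open cover $\{V_j\}$ of order at most $k+1$. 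Assign to each $V_j$ some $S'_j$ with $V_j\subset U_{S'_j}$, and take a partition of unity $\{\phi_j\}$ subordinate to $\{V_j\}$. This gives a map $\Phi:T_r(f)\to N$, where $N$ is the nerve, of dimension at most $k$. The plan is to use $\Phi$ to build a continuous map $g:T_r(f)\to\rr^{k+1}$ (or into a $k$-dimensional target) that records which $t$-set of vertices to delete.

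I would then extend the usual configuration-space test map by additional coordinates that penalize using the deleted vertices. Concretely, let $X=(\Delta^n)^{*r}_{\Delta(2)}$ be the $r$-fold deleted join, which is $n$-connected with a free $\zz_q^m$-action whose fixed points are absent. Form the test map $X\to W_r^{\oplus(d+1)}$, where $W_r$ is the standard $(r-1)$-dimensional representation. Augment it with $t(k+1)$ further copies of $W_r$. For each of the $t$ deleted vertices and each of the $k+1$ coordinates of the nerve direction, such a copy is built from the weights that the $r$ join-components place on the chosen vertex, modulated by $\phi_j$. The total target has dimension $(r-1)(d+1)+t(k+1)(r-1)$, while the connectivity of $X$ is $n=(r-1)(d+1)+t(k+1)$. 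These do not match, so I would adjust the construction. The better choice is to make each extra constraint a scalar equation, namely that the barycentric weight of the vertex $v_{S'_j,i}$ vanishes in all $r$ parts. Such a constraint is $\zz_q^m$-invariant, so it lives in a trivial representation. Using the Volovikov/Dold-type index argument (or the constraint method of Blagojevi\'c--Frick--Ziegler), $t(k+1)$ trivial equations cost only $t(k+1)$ dimensions of the simplex. That is exactly the slack in $n$.

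The core step therefore reads as follows. Equivariantly, a zero of the augmented test map gives $r$ disjoint faces with a common image point $p$. These faces avoid, in an averaged sense encoded through $\Phi(p)$, the vertices of the $t$-set $S'_j$ attached to the simplex of $N$ containing $\Phi(p)$. Then $p\in T_r(f|_{\Delta^n\setminus S'_j})$ while $p\in U_{S'_j}$, which is a contradiction. To make "averaged" into "exact", I would replace the scalar constraints by $k+1$ maps $\psi_\ell$, taking values in $\rr^t$, built from the partition of unity via a coloring of the nerve. A cover of order $k+1$ can be refined to one split into $k+1$ families of pairwise disjoint sets, as in Ostrand's theorem. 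The $\ell$-th family then determines a single $t$-set on each of its pieces, and requiring the weights on that $t$-set to vanish uses $t$ equations per family, for $t(k+1)$ in total.

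The main obstacle is this last step. It is the passage from a low-dimensional cover to exactly $t(k+1)$ equivariant scalar constraints that are continuous on the whole configuration space, not merely near $T_r(f)$, and whose vanishing at a Tverberg point forces at least one family's $t$-set to be genuinely avoided. I would first try Ostrand's colored decomposition combined with a product of distance-type functions, and then verify, via the index or Euler class of $W_r^{\oplus(d+1)}\oplus\rr^{t(k+1)}$ restricted appropriately, that the zero set is nonempty for prime power $r$.
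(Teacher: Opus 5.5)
Your first step is sound, and it is the combinatorial input the paper also uses. The sets $U_{S'}$ are relatively open, and $\dim T_r(f)\le k$ lets you refine them to a finite cover of order at most $k+1$.

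The gap is the existence step that is meant to produce the contradiction. There is no principle by which $G$-invariant scalar constraints ``cost one dimension each.''
\begin{itemize}
\item Dold/Volovikov-type arguments need a fixed-point-free target sphere, and $S(W_r^{\oplus(d+1)}\oplus\rr^{t(k+1)})$ has fixed points.
\item The Euler class (or top Stiefel--Whitney class) of $\xi$ plus a trivial summand vanishes, because a trivial summand carries a nowhere-zero section. So the Euler-class verification you propose at the end detects nothing.
\item The Blagojevi\'c--Frick--Ziegler constraints are $W_r$-valued and cost $r-1$ each, exactly as your first count showed.
\end{itemize}
The principle also proves too much. Write $S'_j=\{v_{j,1},\dots,v_{j,t}\}$ and let $\lambda_v$ be the total weight on $v$. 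Take only the $t$ nonnegative constraints $\psi_i=\sum_j\phi_j(p)\,\lambda_{v_{j,i}}$, $i=1,\dots,t$, summed over the whole cover, with no Ostrand coloring. A common zero already forces some $S'_j$ with $\phi_j(p)>0$ to be unused by a Tverberg partition of a point $p\in U_{S'_j}$. Your principle would then give $C^t_r(f)\neq\emptyset$ for $n=(r-1)(d+1)+t$, independently of $k$. This is false for $n+1$ points on a line: there $C^t_r(S)=[x_{r+t},x_{n-r-t+2}]$ is empty whenever $n<2(r-1)+2t$. In your scheme the hypothesis $\dim T_r(f)\le k$ enters only through counting equations, and that cannot be the mechanism.

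What is missing is a lower bound on the equivariant topology of the Tverberg solution space itself, combined with a covering argument on that space. The paper works with $Z=\Psi^{-1}(0)\subset K=G^{*(n+1)}$ and the map $\rho:Z/G\to T_r(f)$. (Note that $K$ is $(n-1)$-connected, not $n$-connected.) The argument has three steps.
\begin{itemize}
\item \textbf{Lower bound.} Let $L=t(k+1)$ and build $B_L(u)$ from $u\in H^1(BG;\zz_q)$ and its Bockstein. The paper shows $B_L(u)$ restricts nontrivially to $Z/G$. This follows from $B_L(u)\,e(\xi)\neq 0$ in $H^n(BG;\zz_q)$ (top Stiefel--Whitney class if $q=2$), injectivity of $H^n(BG)\to H^n(K/G)$, and a Thom-class argument: a class vanishing on the zero set of a section annihilates the Euler class. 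This is where the prime-power hypothesis is used.
\item \textbf{Trivializing sets.} The class $c=u|_{Z/G}$ vanishes on each $O_i=\{\lambda_i>0\}$, because the label of $x_i$ gives a section of the associated $\zz_q$-cover there.
\item \textbf{Covering contradiction.} If $C^t_r(f)=\emptyset$, every fiber satisfies $\rho^{-1}(p)\subset\bigcup_{i\in S'_p}O_i$ with $|S'_p|=t$. Pull back your order-$(k+1)$ cover and intersect with the $O_i$. This gives a cover of $Z/G$ of order at most $t(k+1)$ by sets on which $c=0$. That kills $B_L(c)$ already on normalized \v{C}ech cochains, a contradiction.
\end{itemize}
Your Ostrand construction can be rescued the same way. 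Let $v(V,i)$ be the $i$-th vertex of the $t$-set attached to $V$. If the augmented map had no zero, then for each fixed $(\ell,i)$ the sets $\{z:\phi_V(\rho(z))>0,\ \lambda_{v(V,i)}(z)>0\}$, $V\in\mathcal V_\ell$, are pairwise disjoint. Hence $Z/G$ would be covered by $t(k+1)$ open sets on which $c$ vanishes. But this again needs the lower bound $B_L(c)\neq 0$, which is exactly the ingredient your proposal does not supply.
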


Our second main result confirms Conjecture 1 when \(\dim T_2(S)=0\).

\begin{theorem}\label{thm:cascade-case}
    Let \(d\) be  a positive integer and $S \subset \rr^d$ be a finite set.  Suppose $\dim \big(T_2(S)\big) \le 0$.  Then, 
    \[\sum_{r=1}^{|S|} \dim(T_r(S)) \ge 0.\]
\end{theorem}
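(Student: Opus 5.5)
Write $N=|S|$ and let $D=\dim\conv(S)$. Since Tverberg points of $S$ lie in $\operatorname{aff}(S)$, we may assume $D=d$. Then $\dim T_1(S)=D$. For $r\ge 2$ we have $T_r(S)\subset T_2(S)$, and $T_2(S)$ has dimension $\le 0$, so every $\dim T_r(S)$ with $r\ge 2$ is $0$ or $-1$. Let $R$ be the largest $r$ with $T_r(S)\neq\emptyset$. The sum then equals $D+(R-1)\cdot 0-(N-R)$, so it suffices to prove $N\le D+R$.

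\textbf{Case $T_2(S)=\emptyset$.} By Radon's theorem $S$ is affinely independent, so $N\le D+1$ and $R=1$, which gives the bound.

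\textbf{Case $T_2(S)\neq\emptyset$.} Assume $T_2(S)$ is a nonempty finite set. I would first prove a structural lemma.

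\emph{Lemma.} If $C\subset S$ is a circuit (a minimal affinely dependent subset) with Radon point $p$, and $C'$ is a disjoint subset with $p\in T_2(C')$ or $p\in\conv(C')$, then the directions $\operatorname{aff}(C)-p$ and $\operatorname{aff}(C')-p$ intersect only in $0$.

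The idea is a perturbation argument. Radon points of $S$ are images of the normalized affine dependencies $\lambda$ under $\lambda\mapsto\sum_{\lambda_i>0}\lambda_i x_i/\sum_{\lambda_i>0}\lambda_i$. This map is continuous on each open sign-cell of the dependency space. A common nonzero direction $v$ in both affine hulls lets us slide $p$ along $v$ while keeping $p$ in the relative interiors of suitable convex hulls. Concretely, we take a Radon partition $A|B$ of $C$ with $p$ in the relative interior of both $\conv A$ and $\conv B$, together with the corresponding structure on $C'$. Recombining the parts then produces a one-parameter family of distinct Radon points $p+\varepsilon v$ of $S$, contradicting finiteness of $T_2(S)$. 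Making this recombination precise is the step I expect to be the main obstacle. One must check that the moved point stays in a convex hull of a genuine partition of $C\cup C'$. I would first try merging $A$ with one side of $C'$ and $B$ with the other side, exploiting the relative-interior condition.

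With the lemma, I would prove $N\le D+R$ by induction on $N$.

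1. Pick a circuit $C\subset S$. Since $N\ge D+2$, one exists. Let $p$ be its Radon point and $A|B$ its Radon partition, so $|C|=\dim\operatorname{aff}(C)+2$.
2. Set $S'=S\setminus C$ and $D'=\dim\conv(S')$.
   - If $p\in T_2(S')$, then by the finiteness of Radon points we may apply induction at $p$. This gives a Tverberg partition of $S'$ into $R'$ parts through $p$ with $|S'|\le D'+R'$.
   - If $T_2(S')$ is empty or avoids $p$, we use instead the trivial partition, $R'=1$ if $p\in\conv(S')$ (with $|S'|\le D'+1$ by affine independence), or $R'=0$ otherwise.
3. Adjoin $A$ and $B$ as two new parts. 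This gives an $(R'+2)$-partition of $S$ through $p$, so $R\ge R'+2$.
4. By the lemma, the directions of $\operatorname{aff}(C)$ and $\operatorname{aff}(S')$ at $p$ are independent, so $D'+\dim\operatorname{aff}(C)\le D$.
5. Combining, $N=|S'|+|C|\le D'+R'+\dim\operatorname{aff}(C)+2\le D+R$.

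Some care is needed in the inductive formulation. The statement should be tracked as ``every point of $T_2$ is the common point of a Tverberg partition with at least $N-D$ parts''. The degenerate subcases $R'\in\{0,1\}$ must be handled separately, using affine independence of $S'$ together with the lemma applied to points of $S'$ versus $C$.
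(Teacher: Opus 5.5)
Your reduction to $N\le D+R$ is correct, and your key lemma is true. The recombination you suggest works: with $p=0$, the circuit gives $\operatorname{span}(C)=\operatorname{span}(A)\oplus\operatorname{span}(B)$ with $0$ in the relative interiors of $\conv A$ and $\conv B$. Write $v=v_A+v_B$ and $v=\sum_j\nu_j z_j$ with $z_j\in C'$. Then the partition $(A\cup C'_+)\mid(B\cup C'_-)$ realizes the Radon points $\varepsilon\bigl(\sum_{\nu_j>0}\nu_j z_j-v_A\bigr)$ for all small $\varepsilon>0$. If that vector vanishes, then $v_A$ or $-v_B$ is a nonzero element of the cone generated by $C'$, and $A\mid(B\cup C')$ or $(A\cup C')\mid B$ works instead.

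The gaps are in the induction. First, you only use that $T_2(S)$ is finite, never that it is a single point, and Step 2 silently needs the latter. The case $T_2(S')\neq\emptyset$, $p\notin T_2(S')$ is lumped with the affinely independent case. But then $S'$ is \emph{not} affinely independent, and you have no partition of $S'$ through $p$ to extend. Your ``every point of $T_2$'' formulation also needs a circuit with a prescribed Radon point. Both issues disappear once you note, as the paper does first, that for $N\ge D+3$ the set $T_2(S)$ is the image of the connected $\ell^1$-sphere of the dependence space $W$ (with $\dim W\ge 2$) under $\alpha\mapsto\sum_i\alpha_i^+x_i$. Hence it is connected, so $T_2(S)=\{p\}$ and $T_2(S')\subseteq\{p\}$.

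Second, the subcase $R'=0$ ($S'$ affinely independent and $p\notin\conv S'$, including $S'=\emptyset$) does not close. There $|S'|=D'+1>D'+R'$. Your lemma is also unavailable, since its hypothesis ($p\in\conv C'$ or $p\in T_2(C')$) fails. Even if you had independence of directions, Steps 4--5 would give only $N\le D+3$, while you only know $R\ge 2$. Applying the lemma to single points of $S'$ does not help, because that requires the point to equal $p$.

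What you need is that every affine dependence of $S$ is supported on $C$, so $N=D+2$, and this is a genuinely additional ingredient. After translating so that $T_2(S)=\{0\}$, let $\gamma$ be the circuit dependence and suppose some dependence $\alpha$ has $\alpha|_{S'}\neq 0$. Let $S'_+=\{i\in S':\alpha_i>0\}$. The Radon points of $\alpha+s\gamma$ for $s\to+\infty$ and $s\to-\infty$ have positive sets $A\cup S'_+$ and $B\cup S'_+$. This gives $\sum_{A}\alpha_ix_i+\sum_{S'_+}\alpha_ix_i=0=\sum_{B}\alpha_ix_i+\sum_{S'_+}\alpha_ix_i$. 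Adding these and comparing with $\sum_i\alpha_ix_i=0$ yields $\sum_{i\in S'}|\alpha_i|x_i=0$. So $0\in\conv S'$, a contradiction.

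This is essentially the paper's claim that every block of proportional coordinate functionals has the Radon point in its convex hull. The paper then avoids induction altogether by counting blocks, splitting one block into $B_1^+,B_1^-$ when there are only $t+1$ of them. With these two additions, your circuit-peeling induction goes through and gives the same bound $R\ge N-D$.
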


In particular, for $t=1$ and $f$ linear, the parameters of \cref{thm:main} match those of \cref{conj:kalai-simplified}.  We obtain $C^1_r(f) \neq \emptyset$ instead of $T_{r+1}(f) \neq \emptyset$. We first prove \cref{thm:main} for $r=2, t=1$ when $f$ is linear, as the proof is much easier to describe and helps build intuition on the topic.

\cref{thm:main} also holds for $r=1$ if we allow $m=0$.  This particular instance was proved earlier by Karasev \cite{Karasev2012} as a topological extension of Rado's centerpoint theorem \cite{Rado1946}.  We include a short no-retraction proof  in \cref{sec:rado} for completeness
and because it parallels the fiber-cover argument used for the proof of \cref{thm:main}.

The methods we use are topological.  We reduce the problem to showing that some degree-1 cohomology class of an associated bundle does not vanish in the set of Tverberg partitions that overlap in a particular point $p$.  Our approach is similar to Sarkaria's proof of the topological Tverberg theorem using characteristic classes \cite{Sarkaria2000} (see de Longueville's corrected expository notes \cite{Longueville2002}).  Readers familiar with the subject will notice that applying the methods of this paper for $n=(r-1)(d+1)$ gives a full proof of the topological Tverberg theorem.

As mentioned above, the topological Tverberg theorem fails if $r$ is not a prime power.  Therefore, for \cref{thm:main} with $k=d$, it is possible that $T_{r+1}(f)=\emptyset$.  In other words, the topological version of \cref{conj:kalai-simplified} may fail, so some modification of the conclusion is unavoidable in the topological setting.

We prove the case \(r=2\), \(t=1\), for linear maps in \cref{sec:radon}.
In \cref{sec:preliminaries}, we collect the two topological tools needed for
the general proof: a higher-order cohomological vanishing lemma and a
Thom-class lemma.  We prove \cref{thm:main} in \cref{sec:main-proof}.  We prove the case $r=1$ in \cref{sec:rado}.  We prove \cref{thm:confirm-kalai-simple} in \cref{sec:conj-confirmed-case}.
We conclude with remarks in \cref{sec:remarks}.

\section{Radon partitions}\label{sec:radon}

In this section, we present a simple approach that proves \cref{thm:main} for $r=2, t=1$ when the map $f$ is linear.  The advantage of this special case is that the geometric ideas behind the proof are natural, and the argument showcases the moment when topological tools play an essential part.

A Tverberg partition with two parts is commonly known as a Radon partition.  To prove the special case of this section we first need to find a way to parametrize all Radon partitions of a finite set of points in $\rr^d$.

One way to do this is to use a construction similar to the Gale dual of the set of points.  We denote by $n+1$ the number of points in our set, to align with the notation from \cref{thm:main}.  Formally, given a set $S$ of $n+1$ points $x_1,\dots, x_{n+1}$ in $\rr^d$, consider $L$ to be the $(d+1)\times (n+1)$ matrix where $i$-th column is the vector $(x_i,1)$.  The rank of $L$ is at most $d+1$, so $\dim \ker L \ge n-d$.

The space $\ker L$ represents all affine dependences of the points in $S$.  That is, all $(n+1)$-tuples of coefficients $\alpha_1,\dots,\alpha_{n+1}$ such that $\sum_i \alpha_i x_i = 0$ and $\sum_i \alpha_i = 0$.

To distinguish ordered Radon partitions of $S$, we take $X$ to be the $\ell_1$-sphere of $\ker L$ of radius $2$.  Those are the non-zero vectors $\alpha = (\alpha_1,\dots, \alpha_{n+1}) \in \ker L$ for which $\sum_{i}|\alpha_i|=2$.

 A vector $\alpha = (\alpha_1,\dots,\alpha_{n+1}) \in X$ corresponds to a Radon partition, by defining $A = \{x_i: \alpha_i > 0\}$ and $B = \{x_i: \alpha_i < 0\}$.  The absolute values of the coordinates of $\alpha$ give us exactly the coefficients of convex combinations of $A$ and $B$ that share a point.   

 Therefore, for a set of $n+1 = d+3+k$ points the set of ordered Radon partitions with coefficients will be a sphere of dimension greater than or equal to $(k+1)$.  This sphere has a continuous map to the set $T_2(S)$ of Radon points of $S$, which maps $u$ and $-u$ to the same point, for all $u$ in the sphere.  Intuitively, if $\dim (T_2(S)) \le k$, there will be a Radon point in which its preimage contains a path between two antipodal points of the sphere.  We require some additional tools to prove this formally, but it is the essence of the argument.  We use \v{C}ech cohomology with $\zz_2$ coefficients.  We state the following lemma with more general coefficients since it will be useful to compare it with the version we need for higher values of $t$, in which we use $\zz_q$ coefficients for some prime $q$.  We use the convention from the introduction that \(\dim\) denotes Lebesgue
covering dimension.
 
 \begin{figure}
     \centering
     \includegraphics[width=0.8\linewidth]{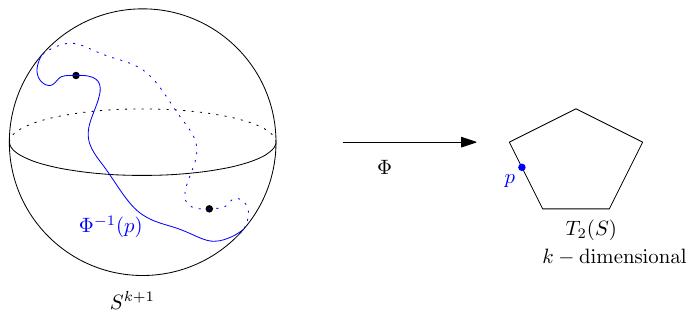}
     \caption{To show that the Radon core is non-empty, we will show the following fact.  If we map a $(k+1)$-dimensional sphere to a $k$-dimensional space using a continuous even map $\Phi$, there will be a point $p$ whose preimage connects two antipodal points of the sphere.}
     \label{fig:sphere}
 \end{figure}

\begin{lemma}\label{claim:important}
Let $k$ be a non-negative integer and $X, Y$ be compact metric spaces and $q$ be a prime number.  Let $f:X\to Y$ be a continuous map and $c \in H^1 (X; \zz_q)$.  If $\dim Y \le k$ and for every $y \in Y$ the restriction of $c$ to the fiber $f^{-1}(y)$ is $0$, then
\[
c^{k+1} =0 \in H^{k+1}(X;\zz_q).
\]
\end{lemma}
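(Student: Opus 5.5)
We prove \cref{claim:important} by a cover-and-nerve argument: cut \(Y\) into small pieces over which \(c\) vanishes, arrange the pieces in \(k+1\) disjoint families using \(\dim Y\le k\), and kill \(c\) on one family at a time so that the product of the \(k+1\) classes dies.

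\textbf{Step 1: \(c\) vanishes on preimages of small neighborhoods.} Fix \(y\in Y\). Since \(c|_{f^{-1}(y)}=0\) and \v{C}ech cohomology is continuous on compact metric spaces, we have
\[
H^1(f^{-1}(y))=\varinjlim H^1(\overline{U}),
\]
where the limit runs over closed neighborhoods \(\overline U\) of \(f^{-1}(y)\) in \(X\). Hence \(c\) already restricts to zero on some neighborhood \(W\) of \(f^{-1}(y)\). Because \(f\) is a closed map (\(X\) is compact), the set \(V_y=Y\setminus f(X\setminus W)\) is an open neighborhood of \(y\) with \(f^{-1}(V_y)\subset W\). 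Therefore \(c|_{f^{-1}(V_y)}=0\). By compactness, finitely many of the sets \(V_y\) cover \(Y\).

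\textbf{Step 2: sort a refinement into \(k+1\) families.} Since \(\dim Y\le k\), the finite cover from Step 1 has a finite open refinement \(\mathcal W\) of order at most \(k+1\). We use the standard reformulation of covering dimension, obtained from the nerve of \(\mathcal W\) and barycentric subdivision. It gives an open cover \(Y=G_0\cup\dots\cup G_k\) in which each \(G_j\) is a disjoint union of open sets, each contained in some \(V_y\). Set \(X_j=f^{-1}(G_j)\). Then \(X_j\) is a disjoint union of open sets, and \(c\) vanishes on each of them. Since a disjoint open decomposition gives a product decomposition of cohomology, \(c|_{X_j}=0\).

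\textbf{Step 3: cup-length argument.} The long exact sequence of the pair \((X,X_j)\) shows that \(c\) lifts to a relative class \(c_j\in H^1(X,X_j;\zz_q)\). The relative cup product gives
\[
c_0\smile\dots\smile c_k\in H^{k+1}\Bigl(X,\textstyle\bigcup_j X_j\Bigr)=H^{k+1}(X,X)=0.
\]
This product maps to \(c^{k+1}\), so \(c^{k+1}=0\).

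The main care is needed in the \v{C}ech setting. The sets \(X_j\) are open, so the relative cup product \(H^*(X,A)\otimes H^*(X,B)\to H^*(X,A\cup B)\) is valid for open \(A,B\) in paracompact spaces, where \v{C}ech and sheaf cohomology agree. Continuity in Step 1 uses that fibers are compact subsets of a compact metric space. Of these, I expect the decomposition in Step 2 to be the main point to verify cleanly. If the decomposition into \(k+1\) disjoint families is awkward to justify, an alternative is to map \(Y\) to the nerve \(N\) of \(\mathcal W\), a simplicial complex of dimension at most \(k\), and use the open stars of vertices in the barycentric subdivision of \(N\). These stars split into \(k+1\) families of disjoint sets according to the dimension of the simplex each barycenter comes from.
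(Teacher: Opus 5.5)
Your proof is correct. Step 1 is essentially the paper's construction of the neighborhoods $U_y$, but after that you take a genuinely different route.

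The paper pulls the order-$(k+1)$ refinement back to a cover $\mathcal F=\{f^{-1}(V_\alpha)\}$ of $X$ of order at most $k+1$. It represents $c$ by a \v{C}ech $1$-cocycle subordinate to $\mathcal F$, and argues that the cochain representing $c^{k+1}$ lives on $(k+2)$-fold intersections, so it is zero.

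You instead turn the order bound into a decomposition $Y=G_0\cup\dots\cup G_k$ into $k+1$ families of pairwise disjoint small open sets. Your nerve argument does this correctly: the open star of $\hat\sigma$ in the barycentric subdivision lies in the open star of every vertex of $\sigma$, so its preimage in $Y$ lies in a member of $\mathcal W$. This covers $X$ by $k+1$ open sets on which $c$ vanishes. You then run the Lusternik--Schnirelmann cup-length argument, which is the same relative-cup-product mechanism the paper uses in \cref{lem:thom-cupzero}.

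Your route uses only formal properties of \v{C}ech cohomology: tautness, the exact sequence of a pair, relative products of open sets, and additivity over finite disjoint open unions. The paper's cochain-level step needs more care to be literally right, for two reasons.
\begin{itemize}
\item A class that is trivial on each $F_\alpha$ has transition data that is only locally constant on possibly disconnected intersections.
\item The vanishing in degree $k+1$ holds for the complex built on strictly increasing index tuples for a total order on $\mathcal F$. It does not hold for normalized or alternating cochains under the Alexander--Whitney product: for a $1$-cocycle, $(a\smile a)(i,j,i)=a(i,j)\,a(j,i)=-a(i,j)^2$ need not vanish.
\end{itemize}

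In exchange, the paper's route carries over directly to \cref{lem:higher-fiber-cover}, by representing $\beta(c)$ on the same cover. Yours would need an extra observation there. One option is that $\beta(c)$ vanishes on $A\cup B$ whenever $c$ vanishes on open sets $A$ and $B$; this follows from Mayer--Vietoris, since $\beta=0$ on $H^0$. The $t(k+1)$ open sets can then be paired off against the factors $\beta(c)$.
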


%We use \cref{claim:important} with $R=\zz_2$.  We state it in general since we need to use it with $R=\zz_q$ for an odd prime $p$ in \cref{sec:odd-primes}.

\begin{proof}
    Fix $y \in Y$.  We have $f^{-1}(y)$ is compact and $c|_{f^{-1}(y)}=0$.  By continuity of \v{C}ech cohomology over neighborhoods of compact sets, there exists an open set $W_y \subset X$ such that $f^{-1}(y) \subset W_y$ and \(c|_{W_y}=0\).  The set $X \setminus W_y$ is compact and does not intersect $f^{-1}(y)$, so $y \not\in f(X \setminus W_y)$.

    The set $f(X \setminus W_y)$ is compact, and therefore closed in $Y$.  In particular, $U_y = Y \setminus f(X \setminus W_y)$ is an open neighborhood of $y$, and it satisfies $f^{-1}(U_y) \subset W_y$.  In particular, $c|_{f^{-1}(U_y)}=0$.  The sets $\{U_y: y \in Y\}$ form an open cover of $Y$.
    
%    Since $c|_{f^{-1}(y)}=0$, there exists an open neighborhood $U_y \subset Y$ of $y$ such that $c|_{f^{-1}(U_y)} = 0$.

    Since $Y$ is compact, we can find a finite subcover $\mathcal{U}$ of $Y$.  As $\dim Y \le k$, there  is a finite refinement $\mathcal{V} = \{V_{\alpha}: \alpha \in A\}$ of $\mathcal{U}$ of order at most $k+1$ (i.e., any $k+2$ different sets of $\mathcal{V}$ have an empty intersection).  For each \(\alpha\in A\), choose \(y(\alpha)\in Y\) such that
\(V_\alpha\subset U_{y(\alpha)}\).

For $\alpha \in A$, define $F_{\alpha} = f^{-1}(V_{\alpha})$.  Since \(V_\alpha\subset U_{y(\alpha)}\), we have
\(F_\alpha\subset f^{-1}(U_{y(\alpha)})\), which implies
\(c|_{F_\alpha}=0\).  The set $\mathcal{F} = \{F_{\alpha}: \alpha \in A\}$ is an open cover of $X$ of order at most $k+1$ such that $c|_{F_{\alpha}} = 0$ for all $\alpha$.

    Because $c$ restricts trivially to every  $F_{\alpha}$, the class $c$ can be represented by transition data subordinate to $\ff$.  In other words, after choosing all local $0$-cochains/trivialization over each $F_{\alpha}$, the differences on pairwise intersections give a \v{C}ech $1$-cocycle representing $c$.

    Then $c^{k+1}$ is represented by the \v{C}ech cup product of $k+1$ copies of that $1$-cocycle.  This is a \v{C}ech cochain of degree $k+1$ subordinate to $\ff$.  However, no $k+2$ members of $\ff$ have non-empty intersection.  Using normalized \v{C}ech cochains subordinate to \(\mathcal F\), the group
\(\check C^{k+1}(\mathcal F;\zz_q)\) is zero, because such cochains are supported
on \((k+2)\)-fold intersections and no such intersections are non-empty.  This implies $c^{k+1}=0$
\end{proof}

\begin{proof}[Proof of \cref{thm:main} for linear Radon partitions]

If $f$ is linear and $r=2$, studying Radon partitions for $f$ is the same as studying the Radon partitions of the images of the vertices of $\Delta^n$.  Let $S = \{x_1,\dots,x_{d+3+k}\}$ be the set of the $n+1=d+3+k$ images of the vertices of $\Delta^n$.  We construct the space $X$ that parametrizes ordered Radon partitions as described above.  Assume that $\dim T_2(S) \le k$.  By construction $\dim X \ge k+1$.  We may assume without loss of generality that the affine span of $S$ is $\rr^d$, so $X \cong S^{k+1}$.  For each $\alpha \in \rr$, let $\alpha_{+} = \max\{\alpha,0\}$.  

Then for $\alpha = (\alpha_1,\dots, \alpha_{d+3+k}) \in X$ we have a natural projection $\Phi: X \to T_2(S)$ defined by $ \Phi (\alpha) = \sum_{i} (\alpha_i)_+x_i$.

Since $\Phi (\alpha) = \Phi (-\alpha)$, the projection factors through the antipodal double cover $\pi : S^{k+1} \to \rp^{k+1}$ as $\Phi = \bar\Phi \circ \pi$ for a continuous map $\bar \Phi : \rp^{k+1}\to T_2(S)$.

Now consider the cohomology ring of $\rp^{k+1}$ with $\zz_2$ coefficients.  Let $\xi\in H^1(\rp^{k+1};\Z_2)$ be the standard generator. Since
\[
H^*(\rp^{k+1};\Z_2)\cong \Z_2[\xi]/(\xi^{k+2}),
\]
we have $\xi^{k+1}\neq 0 \in H^{k+1}(\rp^{k+1};\zz_2).$

We can now apply \cref{claim:important} to $\bar\Phi$ and $c=\xi$.  Since we know the conclusion of the lemma fails, we must have a $y \in T_2(S)$ such that $\xi|_{\bar\Phi^{-1}(y)}\neq 0$.

Write $\bar \Phi^{-1}(y)$ as a disjoint union of its connected components.  Since \v{C}ech cohomology with $\zz_2$ coefficients is additive on connected components, there exists at least one connected component $C \subset \bar\Phi^{-1}(y)$ for which $\xi|_{C} \neq 0$.

Since $\xi|_{C} \neq 0$, the restricted double cover is nontrivial.  Because $C$ is connected, a nontrivial double cover is also connected.  Since the map $\Phi$ is piecewise linear, connectedness of the preimage of a point implies path-connectedness.  This means that two antipodal Radon partitions in $\Phi^{-1}(y)$ are connected via a path that projects to $y$ via $\Phi$.

Let $(A,B)$ be this partition.  Ultimately, we have to reach $(B,A)$.  As we follow the path connecting $(A,B)$ to $(B,A)$ in $\Phi^{-1}(y)$, let us look at what happens to the partition.  The only possible ways in which we modify the partition is that we either remove points or add points to each set one by one, and throughout this process they remain pairwise disjoint and contain $y$ in their convex hull.  Since every $x_i \in S$ must switch sides, there must be a moment when it is not used in either set of the partition, which means that $y \in T_2(S\setminus \{x_i\})$.  In other words, $y \in C^1_2(S)$.

\end{proof}

\section{Topological preliminaries and notation}\label{sec:preliminaries}

%In this section, we prove the following theorem.

%\begin{theorem}\label{thm:main-even}
 %   Let $r,d,k$ be positive integers such that $r=2^m$ for some positive integer $m$.  Let $n=(r-1)(d+1)+k+1$, $\Delta^n$ be an $n$-dimensional simplex, and $f:\Delta^n \to \rr^d$ be a continuous map.  If $\dim (T_r(f)) \le k$, then $C_r(f) \neq \emptyset$.
%\end{theorem}

Throughout the rest of the manuscript, let
\[
r=q^m,
\qquad
G=(\zz_q)^m,
\qquad
D=(r-1)(d+1),
\qquad
L=t(k+1),
\qquad
n=D+L.
\]
We use \v{C}ech cohomology with \(\zz_q\) coefficients.  We will say that an
\(r\)-tuple \((F_1,\dots,F_r)\) of faces of \(\Delta^n\) heavily covers
\(y\in \rr^d\) if \(y\in f(F_i)\) for all \(i\).

Before proving \cref{thm:main}, we require two topological lemmas.  The proof in \cref{sec:radon} had two important components.  The first was a dimension-reduction argument using the kernel of a linear map.  The second was a topological lemma regarding the vanishing of powers of a degree-$1$ cohomology class.

To establish the analogue of \cref{claim:important} we first introduce the Bockstein homomorphism of $\zz_q$.  We describe the Bockstein homomorphism fully for completeness. 

Consider the standard exact sequence
\[
0 \to \zz_q \underset{\times q}{\to} \zz_{q^2} \underset{\mod q}{\to} \zz_q \to 0.
\]
This induces a map $\beta: H^i (X;\zz_q) \to H^{i+1}(X;\zz_q)$.  We make the degree-$1$ case explicit.  Given $c \in H^1(X;\zz_q)$, choose a $1$-cocycle representative $c\in C^1(X;\zz_q)$, which we can lift to a $1$-cochain $\tilde{c}\in C^1(X;\zz_{q^2})$.  Since $c$ is a cocycle mod $q$, $\delta \tilde{c}$ is a multiple of $q$, which means $\delta \tilde{c} = q b$ for a $\zz_q$-valued $2$-cochain $b$.  Now let $\beta(c) := [b] \in H^2(X;\zz_q)$ be the Bockstein of $c$.

\begin{lemma}\label{lem:higher-fiber-cover}
Let \(t\) be a positive integer, \(k\) be a non-negative integer, and \(X,Y\) be compact metric spaces.
Let \(\rho:X\to Y\) be a continuous map and suppose that \(\dim Y\le k\).
Let \(c\in H^1(X;\zz_q)\) and $\beta(c) \in H^2(X;\zz_q)$ be its Bockstein.

Assume that there is a finite family of open subsets \(\mathcal O=\{O_i\}_{i\in I}\)
of \(X\) such that \( c|_{O_i}=0 \qquad\text{for all }i\in I\), and such that for every \(y\in Y\), there is a subset \(I_y\subset I\) with \(|I_y|\le t\) and \(\rho^{-1}(y)\subset \bigcup_{i\in I_y}O_i\).

Let
\[
B_{t(k+1)}(c)=
\begin{cases}
c\,\beta(c)^{(t(k+1)-1)/2}, & \text{if }t(k+1)\text{ is odd},\\[4pt]
\beta(c)^{t(k+1)/2}, & \text{if }t(k+1)\text{ is even}.
\end{cases}
\]
Then \(B_{t(k+1)}(c)=0\in H^{t(k+1)}(X;\zz_q)\).
\end{lemma}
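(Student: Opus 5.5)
Following the proof of \cref{claim:important}, we build a cover of \(X\) by open sets on which \(c\) vanishes, this time of order about \(t(k+1)\). The new feature is that each fiber is covered by up to \(t\) sets of \(\mathcal O\), not by one. The plan has three stages: show that \(c\) vanishes on each \(O_i\) and hence so does \(\beta(c)\); build a controlled cover of \(X\); then use a product-of-vanishing-classes argument, which is where the Bockstein enters.

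\emph{Step 1 (vanishing on \(\mathcal O\)).} Since \(c|_{O_i}=0\) and \(\beta\) is natural, \(\beta(c)|_{O_i}=\beta(c|_{O_i})=0\). Every factor of \(B_{t(k+1)}(c)\) therefore restricts to zero on each \(O_i\).

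\emph{Step 2 (cover of \(Y\)).} For each \(y\), the set \(X\setminus\bigcup_{i\in I_y}O_i\) is compact and misses \(\rho^{-1}(y)\). As in \cref{claim:important}, this gives an open \(U_y\ni y\) with \(\rho^{-1}(U_y)\subset\bigcup_{i\in I_y}O_i\). Take a finite subcover and a refinement \(\mathcal V=\{V_\alpha\}\) of order at most \(k+1\), so that each \(V_\alpha\) has an assigned index set \(J_\alpha\) with \(|J_\alpha|\le t\). Since \(\dim Y\le k\), we can instead choose \(\mathcal V\) to split into \(k+1\) subfamilies \(\mathcal V_0,\dots,\mathcal V_k\), each consisting of pairwise disjoint sets; this is the standard colouring form of covering dimension for compact metric spaces, obtained from the nerve. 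Put \(W_j=\rho^{-1}(\bigcup\mathcal V_j)\). Then \(X=W_0\cup\dots\cup W_k\). On each component piece \(\rho^{-1}(V)\) with \(V\in\mathcal V_j\), we have \(\rho^{-1}(V)\subset O_{i_1}\cup\dots\cup O_{i_t}\). Intersecting, \(W_j=\bigcup_{s=1}^t W_{j,s}\), where \(W_{j,s}\) is the disjoint union over \(V\in\mathcal V_j\) of \(\rho^{-1}(V)\cap O_{i_s(V)}\). Each \(W_{j,s}\) is a disjoint union of open sets on which \(c\) and \(\beta(c)\) vanish, so both classes vanish on \(W_{j,s}\). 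This yields \(t(k+1)\) open sets covering \(X\), each killing \(c\) and \(\beta(c)\).

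\emph{Step 3 (products).} The standard fact is that if \(a_1,\dots,a_N\) have positive degree and \(a_\ell|_{A_\ell}=0\) for open sets \(A_1,\dots,A_N\) covering \(X\), then \(a_1\cdots a_N=0\). The reason is that each \(a_\ell\) lifts to \(H^*(X,A_\ell)\), and the relative cup product lands in \(H^*(X,\bigcup A_\ell)=0\); for Čech cohomology of compact metric spaces one passes through neighbourhoods, or argues directly with cochains as in \cref{claim:important}. With \(N=t(k+1)\) sets, this bounds the length of a vanishing product by \(N\) factors. However, \(B_{t(k+1)}(c)\) has only about \(N/2\) factors when we count \(\beta(c)\) as one factor. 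The key point, and the reason for using \(B\), is that a degree-2 factor \(\beta(c)\) must be assigned to two sets of the cover at once. For this we use the Čech-level fact that, on a cover where \(c\) is given by a cocycle \(g\) with \(\delta\)-lift \(\tilde g\), \(\beta(c)\) is represented by \(\delta\tilde g/q\). Choosing the lift \(\tilde g\) to be zero wherever \(c\) trivializes, one checks that \(\beta(c)\) vanishes relative to the union of any two of the covering sets. In other words, \(\beta(c)\) behaves like ``\(c\cdot c\)'' in the odd-\(q\) setting, where \(c^2=0\). Grouping the \(N\) sets into pairs for the \(\beta(c)\) factors, with one set left over for \(c\) when \(N\) is odd, the relative cup product argument gives \(B_{t(k+1)}(c)=0\).

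The main obstacle is this last claim: that \(\beta(c)\) restricts to zero, compatibly with cup products, on the union \(A\cup A'\) of two open sets where \(c\) vanishes. I would try to prove it via Mayer--Vietoris. On \(A\cup A'\), \(c\) is the coboundary image of a locally constant difference function \(h\in H^0(A\cap A';\zz_q)\). Its Bockstein is then the image of \(\beta(h)=0\), because \(\beta\) commutes with the connecting map up to sign. If that fails, the fallback is to work directly with Čech cochains on the nerve of the refined cover, choosing lifts to \(\zz_{q^2}\) that vanish on single sets.
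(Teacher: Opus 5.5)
Your proof is correct, but it is organized differently from the paper's. The paper never reduces to a fixed number of sets. It keeps the cover $\mathcal W=\{\rho^{-1}(V_\alpha)\cap O_i\}$, which may have many members but has \emph{order} at most $t(k+1)$. It represents $c$ by a \v{C}ech $1$-cocycle $a$ on $\mathcal W$ and $\beta(c)$ by $b=\delta\tilde a/q$ on the same cover. It then concludes because the degree-$t(k+1)$ cochain $a\smile b^{(t(k+1)-1)/2}$ or $b^{t(k+1)/2}$ lies in $\check C^{t(k+1)}(\mathcal W;\zz_q)=0$. Degree counting does all the work there, so $c$ and $\beta(c)$ need no separate treatment.

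You instead use the colouring form of covering dimension to turn ``order $\le k+1$'' into $k+1$ disjoint families. This gives exactly $t(k+1)$ open sets killing $c$. You then run a Lusternik--Schnirelmann relative cup product argument in which each factor $\beta(c)$ is charged to the union of two of these sets. Your pairing lemma is true, and your Mayer--Vietoris proof of it works: $c|_{A\cup A'}=\Delta(h)$ with $h\in H^0(A\cap A';\zz_q)$, $\beta$ anticommutes with $\Delta$, and $\beta(h)=0$ because locally constant $\zz_q$-valued functions lift to $\zz_{q^2}$. Equivalently, it is the paper's argument applied to the two-member cover $\{A,A'\}$, which has no triple intersections.

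No extra ``compatibility with cup products'' is needed. Once $\beta(c)|_{A\cup A'}=0$, exactness gives a lift to $H^2(X,A\cup A')$, and the ordinary relative cup product finishes. Your remark about choosing $\tilde g$ ``zero wherever $c$ trivializes'' is not meaningful as stated; replace it with this argument.

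The trade-off is as follows. Your route uses only standard cohomological tools (relative cup products for open sets, Mayer--Vietoris, naturality of $\beta$). It avoids cochain-level cup products on a fixed cover, which in the paper tacitly require ordered \v{C}ech cochains with locally constant coefficients. The price is an additional dimension-theoretic input (e.g.\ open stars of barycenters in the subdivided nerve) and the special pairing step for $\beta(c)$.
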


For $q=2$, for $c \in H^1(X;\zz_2)$ we have $\beta(c) = c^2$.  This shows for $t=1,q=2$ we recover $B_{t(k+1)}=c^{k+1}$, so the result above generalizes \cref{claim:important}.

\begin{proof}
Recall $L=t(k+1)$.  Fix \(y\in Y\).  By assumption, there is a set \(I_y\subset I\) with \(|I_y|\le t\) such that \(\rho^{-1}(y)\subset \bigcup_{i\in I_y}O_i\).

For a fixed $y$, let $W_y = \bigcup_{i \in I_y}O_i$.  The set $X \setminus W_y$ is compact, and since $\rho^{-1}(y) \subset W_y$ we have $y \not\in \rho (X \setminus W_y)$.  Because $Y$ is metric and $\rho(X \setminus W_y)$ is compact, it is closed.  Therefore, $U_y = Y \setminus \rho(X \setminus W_y)$ is an open neighborhood of $y$.  Moreover, \(\rho^{-1}(U_y)\subset \bigcup_{i\in I_y}O_i\).

%Indeed, otherwise one could find a sequence \(y_j\to y\) and points
%\[
%x_j\in \rho^{-1}(y_j)\setminus \bigcup_{i\in I_y}O_i.
%\]
%Passing to a convergent subsequence, compactness gives a limit point
%\(x\in \rho^{-1}(y)\), contradicting the inclusion above.

The sets \(U_y\) cover \(Y\).  By compactness, we can choose a finite subcover, and then refine it by a finite open cover \(\mathcal V=\{V_\alpha\}_{\alpha\in A}\)
of order at most \(k+1\).  For each \(\alpha\), choose \(y(\alpha)\in Y\) such that \(V_\alpha\subset U_{y(\alpha)}\).  Let \(I_\alpha=I_{y(\alpha)}\).

For \(\alpha\in A\) and \(i\in I_\alpha\), define
\[
W_{\alpha,i}=\rho^{-1}(V_\alpha)\cap O_i.
\]
The family \( \mathcal W=\{W_{\alpha,i}:\alpha\in A,\ i\in I_\alpha\}\) is an open cover of \(X\).  Moreover, every member of \(\mathcal W\) is
contained in some \(O_i\), and therefore \(c|_{W_{\alpha,i}}=0\).

The order of \(\mathcal W\) is at most \(L\).  Indeed, if \(x\in X\), then
\(\rho(x)\) lies in at most \(k+1\) of the sets \(V_\alpha\), and for each such
\(\alpha\) there are at most \(t\) choices of \(i\in I_\alpha\).

Since $c|_{W_{\alpha,i}}=0$ for every member of $\mathcal{W}$, we can choose local trivializations (equivalently local $0$-cochains) over the members $W_{\alpha,i}$ of $\mathcal{W}$.  Their differences on pairwise intersections give a normalized \v{C}ech $1$-cocycle $a \in \check{C}^1(\mathcal{W};\zz_q)$ representing $c$.

Now let us show that we can also represent $\beta(c)$ on this cover.  This follows from the fact that the Bockstein homomorphism is natural, but we do it explicitly.  Choose lifts of $a$ to a normalized \v{C}ech $1$-cochain $\tilde{a} \in \check{C}^1(\mathcal{W};\zz_{q^2}).$  Since $a$ is a cocycle modulo $q$, the coboundary $\delta \tilde{a}$ is divisible by $q$.  We can therefore write $\delta \tilde{a} = qb$ for some normalized \v{C}ech $2$-cochain $b \in \check{C}^2(\mathcal{W};\zz_q)$.  Then $b$ is a \v{C}ech $2$-cocycle representing $\beta(c)$.

Using $a$ for $c$ and $b$ for $\beta(c)$, the class $B_L(c)$ is represented subordinate to $\mathcal{W}$ by taking 
\[
\begin{cases}
a\smile b^{(L-1)/2}, & \text{if }L\text{ is odd},\\[4pt]
b^{L/2}, & \text{if }L\text{ is even}.
\end{cases}
\]

Since $\mathcal{W}$ has order at most $L$, no $L+1$ members of $\mathcal{W}$ have a non-empty intersection.  Therefore, $\check{C}^{L}(\mathcal{W};\zz_q) = 0$ for normalized \v{C}ech cochains.  The degree-$L$ representative of $B_L(c)$ is zero and therefore
\[
B_L(c) = 0 \in H^L(X;\zz_q).
\]
\end{proof}

To replace the dimension-reduction argument from \cref{sec:radon}, we will use some characteristic classes of vector bundles.  The standard characteristic classes to detect zeros of vector bundles are the Stiefel--Whitney classes when we work with $\zz_2$ coefficients, and the Euler class when we work with $\zz_q$ coefficients for a prime $q$ (provided that the bundle is oriented).

For the bundles that we construct in \cref{sec:main-proof}, we have explicit descriptions of the relevant characteristic classes.  However, we state the following lemma in a more general setting.  The lemma is a standard consequence of the Thom-class formalism; compare with Stasheff--Milnor, where similar statements are discussed for the Euler class of an oriented bundle \cite{Milnor74}*{Chpt. 9}.

\begin{lemma}\label{lem:thom-cupzero}
Let \(\eta\to X\) be a real vector bundle of rank \(D\), and let
\(\sigma:X\to \eta\) be a section with zero set \(Z(\sigma)\).

For every \(a\in H^\ell(X;\zz_2)\) whose restriction to
\(Z(\sigma)\) is zero,
\[
a\smile w_D(\eta)=0\in H^{\ell+D}(X;\zz_2),
\]
where $w_D(\eta)$ is the top Stiefel--Whitney class of $\eta$.

If \(q\) is an odd prime and \(\eta\) is oriented, then for every
\(a\in H^\ell(X;\zz_q)\) whose restriction to \(Z(\sigma)\) is zero,
\[
a\smile e(\eta)=0\in H^{\ell+D}(X;\zz_q),
\]
where $e(\eta)$ is the Euler class of $\eta$.
\end{lemma}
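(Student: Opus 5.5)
The plan is to express the top class through the Thom class and then use the section $\sigma$ to push the computation into the complement of the zero section. Write $E=\eta$ for the total space, $E_0=E\setminus s_0(X)$ for the complement of the zero section, and $U\in H^D(E,E_0)$ for the Thom class. We take $\zz_2$ coefficients, or $\zz_q$ coefficients when $\eta$ is oriented. By definition, $w_D(\eta)$ (respectively $e(\eta)$) equals $s^*j^*U$, where $j:(E,\emptyset)\to(E,E_0)$ is the inclusion and $s$ is any section. Since every section is homotopic to the zero section through the linear homotopy $t\sigma$, we may pull back along $\sigma$ instead of along the zero section.

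The key observation is that $\sigma$ maps $X\setminus Z(\sigma)$ into $E_0$. Hence $\sigma$ defines a map of pairs $(X,X\setminus Z(\sigma))\to(E,E_0)$. This gives a relative class
\[
u=\sigma^*U\in H^D(X,X\setminus Z(\sigma)),
\]
and the image of $u$ in $H^D(X)$ is $w_D(\eta)$ (respectively $e(\eta)$).

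Next we use the hypothesis on $a$. Since $a|_{Z(\sigma)}=0$ and $Z(\sigma)$ is compact, continuity of \v{C}ech cohomology, exactly as in the proofs of \cref{claim:important} and \cref{lem:higher-fiber-cover}, gives an open neighborhood $N\supset Z(\sigma)$ with $a|_N=0$. The long exact sequence of $(X,N)$ then lifts $a$ to a relative class $\tilde a\in H^\ell(X,N)$. Shrinking to an open $V$ with $Z(\sigma)\subset V\subset \overline V\subset N$, we can regard $u$ as coming from $H^D(X,X\setminus \overline{V})$. For this step we use that $\sigma$ maps $X\setminus V$ into $E_0$, or equivalently that the restriction $H^D(X,X\setminus Z)\to H^D(X,X\setminus \overline V)$ has $u$ in its image. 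This holds because $\sigma^*$ already factors through the pair $(X,X\setminus V)$.

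The relative cup product then gives
\[
\tilde a\smile u\in H^{\ell+D}\bigl(X,N\cup (X\setminus V)\bigr)=H^{\ell+D}(X,X)=0,
\]
since $N\cup(X\setminus V)=X$. The image of this product in $H^{\ell+D}(X)$ is $a\smile w_D(\eta)$ (respectively $a\smile e(\eta)$), so that class vanishes. The cup product with $u$ is natural, so these identifications are compatible.

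The main obstacle is the technical validity of relative cup products in \v{C}ech cohomology for arbitrary compact metric $X$. The relative product $H^*(X,A)\otimes H^*(X,B)\to H^*(X,A\cup B)$ needs $\{A,B\}$ to be an excisive couple, which is why I choose both $N$ and $X\setminus \overline V$ open (or both closed with open interiors covering). If that is awkward, I would first reduce to the case where $X$ is a finite CW complex or ANR, where singular and \v{C}ech cohomology agree, and invoke the standard product there.
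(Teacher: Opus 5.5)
Your proposal is correct and follows the paper's proof. You pull the Thom class back along $\sigma$ to a relative class in $H^D(X,X\setminus Z(\sigma))$, lift $a$ to a relative class using $a|_{Z(\sigma)}=0$, and note that the relative cup product lands in $H^{\ell+D}(X,X)=0$. Your extra step is a genuine refinement the paper skips, since the paper cups $H^\ell(X,Z(\sigma))$ with $H^D(X,X\setminus Z(\sigma))$ directly, a couple that is not excisive: you thicken $Z(\sigma)$ to an open $N$ with $a|_N=0$ and pair with $\sigma^*U\in H^D(X,X\setminus V)$, which comes directly from the map of pairs $(X,X\setminus V)\to(E,E_0)$ (your phrase that the restriction map ``has $u$ in its image'' has the direction reversed, but nothing depends on it).
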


\begin{proof}
We give the proof in the oriented case first.  Let \(u_\eta\in H^D(\eta,\eta\setminus X;\zz_q)\) be the Thom class of \(\eta\).  Since \(\sigma\) is nowhere zero on
\(X\setminus Z(\sigma)\), it induces a map of pairs
\[
\sigma:(X,X\setminus Z(\sigma))\to (\eta,\eta\setminus X).
\]
Hence we obtain a relative class
\[
\tau_\sigma:=\sigma^*(u_\eta)
\in H^D(X,X\setminus Z(\sigma);\zz_q).
\]
Its image under the natural map
\[
H^D(X,X\setminus Z(\sigma);\zz_q)\to H^D(X;\zz_q)
\]
is the Euler class \(e(\eta)\).

Now assume that \(a|_{Z(\sigma)}=0\).  By exactness of the long exact sequence
of the pair \((X,Z(\sigma))\), there exists
\(\tilde a\in H^\ell(X,Z(\sigma);\zz_q)\) whose image in \(H^\ell(X;\zz_q)\) is \(a\).  The relative cup product gives
\[
\tilde a\smile \tau_\sigma
\in
H^{\ell+D}\bigl(X,Z(\sigma)\cup (X\setminus Z(\sigma));\zz_q\bigr)
=
H^{\ell+D}(X,X;\zz_q)=0.
\]
Passing to absolute cohomology, we obtain \(a\smile e(\eta)=0\).

The proof with \(\zz_2\) coefficients
and the top Stiefel--Whitney class is identical; the image of $\tau_{\sigma}$ under the corresponding natural map is the top Stiefel--Whitney class
\end{proof}

\section{Proof of \cref{thm:main}}\label{sec:main-proof}

\begin{proof}[Proof of \cref{thm:main}]
We identify the $r$ parts in an ordered $r$-partition of the vertices of $\Delta^n$ with the elements of $G$.  Denote by $W_G$ the $(r-1)$-dimensional space
\[
W_G = \left\{(a_g)_{g \in G} \in \rr^{|G|}: \sum_{g \in G}a_g = 0\right\}.
\]

To parametrize partitions, we will use the $(n+1)$-fold join of $G$; namely the free $G$-space

\[
K = G^{*(n+1)}= \left\{\sum_{i=1}^{n+1}t_ig_i: g_i \in G, \quad \sum^{n+1}_i t_i = 1, \quad t_i \ge 0 \mbox{ for }i=1,\dots,n+1\right\}.
\]

For $g \in G$, denote $I_g = \{i : g_i = g\}$ and $m_g = \sum_{i \in I_g} t_i$.  Let $x_1,\dots,x_{n+1}$ be the vertices of $\Delta^n$.  For $g \in G$, consider the point $y_g \in \rr^{d+1}$

\[
y_g = \begin{cases}
    m_g \left(f\left(\frac{1}{m_g}\sum_{i \in I_g} t_i x_i \right),1\right) & \mbox{ if }m_g \neq 0\\
    0 & \mbox{ otherwise.}
\end{cases}
\]

Now consider $\{v_g: g \in G\}$ a set of $r$ vectors in $W_G$ forming the vertices of a regular simplex centered at the origin.  The only non-trivial affine dependence of the $v_g$, up to scalar multiplication, is $\sum_{g\in G}v_g = 0$.  Then, we can define the test map

\begin{align*}
    \Psi : K  & \to W_G^{\oplus (d+1)} \\
    \sum_{i=1}^{n+1}t_i g_i & \mapsto \sum_{g \in G} y_g \otimes v_g.
\end{align*}

Note that $\Psi$ is $G$-equivariant and continuous.  A zero of $\Psi$ corresponds to an element of $K$ for which all the points $y_g$ are equal.  The last coordinate of each $y_g$ implies that $m_g = 1/r$ for all $g$.  Therefore, the first $d$ coordinates of each $y_g$ imply that we have $r$ points from pairwise-disjoint faces of $\Delta^n$ that $f$ maps to the same point in $\rr^d$.

In other words, $Z = \Psi^{-1}(0)$ is precisely the space of ordered topological Tverberg $r$-partitions for $f$ with coefficients.  The common overlap point of such a Tverberg partition defines a continuous map
\[
\rho : Z / G \to T_r(f).
\]

Since $Z/G$ is compact and $\rho$ is surjective, the set $T_r(f)$ is also compact.  Let $M = K/G$ and $\xi = K \times_G W_G^{\oplus (d+1)}$ and consider the vector bundle $\xi \to M$.  The $G$-equivariant map $\Psi: K \to W_G^{\oplus (d+1)}$ induces a section 
\begin{align*}
s:M &\to \xi \\
[x] & \mapsto [x,\Psi(x)]
\end{align*}

whose zero set is precisely $Z/G$.  For $\beta$ the Bockstein homomorphism associated with the standard exact sequence 
\[
0 \to \zz_q \underset{\times q}{\to} \zz_{q^2} \underset{\mod q}{\to} \zz_q \to 0
\]
and a non-zero $u \in H^1(BG;\zz_q)$, we define $B_{t(k+1)}(u)=B_L(u)$ as in \cref{lem:higher-fiber-cover},
\[
B_{t(k+1)}(c)=
\begin{cases}
c\,\beta(c)^{(t(k+1)-1)/2}, & \text{if }t(k+1)\text{ is odd},\\[4pt]
\beta(c)^{t(k+1)/2}, & \text{if }t(k+1)\text{ is even}.
\end{cases}
\]

\begin{claim}\label{claim:non-zero-class}
    There is a class $c \in H^1(Z/G;\zz_q)$ such that $B_L(c) \neq 0.$
\end{claim}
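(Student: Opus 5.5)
We will take $c$ to be the pullback of $u$ along the classifying map. Since $K$ is a free $G$-space, there is a classifying map $M=K/G\to BG$. Composing with the inclusion $Z/G\hookrightarrow M$ gives a map $\kappa: Z/G\to BG$. Set $c=\kappa^*(u)$. By naturality of the Bockstein and of cup products, $B_L(c)=\kappa^*B_L(u)$ restricted from $M$. It therefore suffices to show that $B_L(u)|_M$ does not vanish on $Z/G$.

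We argue by contradiction using \cref{lem:thom-cupzero}. Suppose $B_L(u)|_{Z/G}=0$. Apply \cref{lem:thom-cupzero} to the bundle $\xi\to M$ of rank $D=(r-1)(d+1)$, its section $s$, and the class $a=B_L(u)|_M\in H^L(M;\zz_q)$. This gives
\[
B_L(u)\smile e(\xi)=0\in H^{L+D}(M;\zz_q)=H^{n}(M;\zz_q),
\]
with $w_D(\xi)$ in place of $e(\xi)$ when $q=2$. For odd $q$ we must check that $\xi$ is orientable. Since $d+1$ copies appear, we can replace $W_G^{\oplus(d+1)}$ by $W_G^{\oplus 2(d+1)}$ if needed, or use the standard fact that $G=(\zz_q)^m$ with $q$ odd acts orientation-preservingly on $W_G$, because it is a group of odd order. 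So orientation is automatic.

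The remaining step is a computation: show that $B_L(u)\smile e(\xi)\neq 0$ in $H^n(M;\zz_q)$. The plan has three parts.

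\emph{Step 1.} The space $K=G^{*(n+1)}$ is $(n-1)$-connected and $n$-dimensional. Hence the classifying map $M\to BG$ is $n$-connected, so $H^j(BG)\to H^j(M)$ is injective for $j\le n$.

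\emph{Step 2.} The bundle $\xi$ is pulled back from $EG\times_G W_G^{\oplus(d+1)}$ over $BG$. Its Euler class there is $e(W_G)^{d+1}$. By the standard computation used in Sarkaria's and Volovikov's proofs, $e(W_G)$ is (up to a unit) the product of all nonzero linear forms in $H^*(BG)$ modulo nilpotents, the Dickson-type top class. For $m=1$ it is, up to a unit, $\beta(u)^{(q-1)/2}$.

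\emph{Step 3.} It remains to show that $B_L(u)\cdot e(W_G)^{d+1}\neq0$ in $H^n(BG;\zz_q)$. Then Step 1 transfers the nonvanishing to $M$. Choose $u$ to be a coordinate class $u_1$. Using $H^*(BG;\zz_q)=\Lambda[u_1,\dots,u_m]\otimes\zz_q[\beta u_1,\dots,\beta u_m]$, the product has a monomial in the polynomial part that is a product of $\beta(u_1)$ and the Dickson invariants. This monomial is nonzero, since the polynomial ring is a domain and the exterior factor contributes at most $u_1$.

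The main obstacle is twofold. First, we must identify $e(\xi)$ precisely enough to guarantee that the product with $B_L(u)$ is nonzero. Second, Step 1 needs care in the top degree $n$, where injectivity requires the connectivity bound to hold exactly rather than merely surjectivity. If injectivity fails in degree $n$, we would instead compute directly on $M$. One way is via the isomorphism $H^n(M;\zz_q)\cong\zz_q$ given by a fundamental-class-type argument for the free $G$-complex $K$. Another is to use an explicit transfer argument as in de Longueville's notes.
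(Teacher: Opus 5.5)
Your proposal is correct and follows essentially the paper's proof. You take $c$ to be the restriction of a nonzero $u\in H^1(BG;\zz_q)$, show that $B_L(u)\smile w_D(\xi)$ (resp.\ $B_L(u)\smile e(\xi)$) is nonzero in $H^n(BG;\zz_q)$ because the polynomial part of $H^*(BG;\zz_q)$ is a domain, pull this back injectively to $M$ using that $K$ is $(n-1)$-connected (injectivity in degree $n$ does hold, so your fallback is unnecessary), and conclude via \cref{lem:thom-cupzero}.

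One small correction for odd $q$: $e(W_G)$ is, up to a unit, the product of the forms $\alpha_1\beta(u_1)+\cdots+\alpha_m\beta(u_m)$ over representatives $\alpha$ of nonzero vectors modulo $\alpha\sim-\alpha$. The product over all nonzero $\alpha$ would have twice the required degree $r-1$. This does not affect your argument, since only its being a nonzero polynomial matters.
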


\begin{proof}
    We break the proof into two cases.

    \textbf{Case 1, $q=2$.}  In this case, $B_L(u) = u^L$.  For $G=(\zz_2)^m$, we have a full description of $H^*(BG;\zz_2)$.  Namely,
    \[
H^{*}(BG;\zz_2) = \zz_2[t_1,\dots,t_m] \quad \deg t_i = 1.
\]
The formula for the top Stiefel--Whitney class for $\xi$ is also known \cites{Basu2024, Blagojevic2015}.  More precisely,
\[
w_D(\xi)=w_{(r-1)(d+1)}(\xi) = \left(\prod_{0\neq \alpha \in (\mathds{F}_2)^m}(\alpha_1 t_1 + \dots + \alpha_m t_m) \right)^{d+1}.
\]
Since \(H^*(BG;\zz_2)\) is a polynomial ring, we have
\[
B_L(u)w_D(\xi) = u^Lw_D(\xi)\neq 0\in H^{D+L}(BG;\zz_2)=H^n(BG;\zz_2).
\]
The free \(G\)-space \(K=G^{*(n+1)}\) is \((n-1)\)-connected, so the classifying
map \(\kappa:M\to BG\) induces an isomorphism in cohomology up to degree \(n-1\) and an injection in
degree \(n\).  Therefore the pullback of \(B_L(u)w_D(\xi)\) to \(M\) is nonzero.
Applying \cref{lem:thom-cupzero} to the section \(s\) of \(\xi\), whose zero set
is \(Z/G\), we conclude that \(B_L(u)|_{Z/G}\neq 0\), so it suffices taking \(c=u|_{Z/G}\) to finish. 

\textbf{Case 2, $q$ is odd.}  For \(G = (\zz_q)^m\) for odd $q$ we also have a full description of $H^1(BG;\zz_q)$.  In this case
\[
H^*(BG;\zz_q)\cong
\Lambda(e_1,\dots,e_m)\otimes \zz_q[t_1,\dots,t_m],
\qquad
\deg e_i=1,\quad \deg t_i=2.
\]
Moreover, the degree-two generators can be chosen so that $t_i = \beta(e_i)$ for all $i$.  In this case the bundle $\xi$ is oriented and we have an explicit expression for the Euler class \cite{Basu2024, Blagojevic2015}.
Up to a nonzero scalar, the Euler class of the corresponding universal bundle
is
\[
e(\xi)=
\prod_{\alpha\in A}(\alpha_1t_1+\cdots+\alpha_m t_m)^{d+1},
\]
where \(A\) is any set of representatives of the nonzero elements of
\((\zz_q)^m\) modulo the involution \(\alpha\sim -\alpha\).

In particular, $e(\xi)$ is a non-zero polynomial of degree $D/2$ in $t_1,\dots, t_m$.  The expression $B_L(u)e(\xi)$ is then a non-zero polynomial in $t_1,\dots, t_m$ if $L$ is even, or a non-zero polynomial in $t_1,\dots,t_m$ multiplied by a non-zero linear term in $e_1,\dots, e_m$ if $L$ is odd.

In either scenario, the description of $H^*(BG;\zz_q)$ shows that 
\[
B_L(u)e(\xi) \neq 0\in H^{D+L}(BG;\zz_q)=H^n(BG;\zz_q).
\]
From this point we can conclude in an identical way as in the first case. The \((n-1)\)-connectedness of \(K=G^{*(n+1)}\) implies that the pullback of \(B_L(u)e(\xi)\) to \(M\) is nonzero.
We can again apply \cref{lem:thom-cupzero} to the section \(s\) of \(\xi\) and conclude that \(B_L(u)|_{Z/G}\neq 0\), and we finish by considering \(c=u|_{Z/G}\).
\end{proof}

%Ultimately, we want to find an element of $T_r(f)$ with a large pre-image under $\rho$.   In the Radon case, we were able to reduce the dimension of the space we needed to analyze by taking the kernel of a linear function.  In the case for this section, where $r=2^m$ and we no longer have the luxury of dealing with linear functions, we lack such a direct reduction in dimension.  As a replacement, we will need an auxiliary lemma.

Now let $\pi: Z \to Z/G$ be the principal $G$-cover.  Since $H^1(BG;\zz_q) \cong \hom (G, \zz_q)$, a non-zero class $u \in H^1(BG;\zz_q)$ corresponds to a non-zero homomorphism $G \to \zz_q$.  %Let $N=\ker u$.

The associated $q$-fold cover is $P_u: Z/\ker(u) \to Z/G$.  Its cohomology class is $c=u|_{Z/G}$.

For each vertex $x_i$ of \(\Delta^n\), define $O_i \subset Z/G$ to be the set of ordered Tverberg partitions in which the coefficient of $x_i$ is positive.  This condition is invariant under the action of $G$, so $O_i$ is a well-defined open subset of $Z/G$.  We claim that $c|_{O_i}=0$ for all $i$.

Over $O_i$, the label of $x_i$ is well-defined.  Taking this label modulo $\ker(u)$ gives a continuous $(G/\ker(u))$-equivariant
\[
P_u|_{O_i}\to (G/\ker(u))\cong \zz_q.
\]
The preimage of $0 \in G/\ker(u)$ gives a continuous section of $P_u|_{O_i}\to O_i$.  Therefore the associated cover is trivial over $O_i$, and therefore $c|_{O_i}=0$.

If we choose the class $c$ from \cref{claim:non-zero-class}, we know that the conclusion of \cref{lem:higher-fiber-cover} is not satisfied.  Therefore, it must be the case that there exists a point $p \in T_r(f)$ such that for every subset $A \subset [n+1]$ of size $t$, $\rho^{-1}(p)$ is not contained in $\bigcup_{i \in A} O_i$.  In other words, there must exist $z_A \in \rho^{-1}(p) \setminus \left( \bigcup_{i \in A} O_i \right)$.

The point $z_A$ corresponds to a Tverberg partition in which $p$ is heavily covered but the vertices $x_i$ with $i \in A$ have coefficient $0$.  Since this holds for any set of $t$ vertices, we have that $p$ is in $C_r^t(f)$, as we wanted to show.

\end{proof}

\section{The case $r=1$}\label{sec:rado}

Let us prove the $r=1$ analogue of \cref{thm:main}.  Note that $T_1(f)$ is just the image of $f$.  A point $p$ is in $C^t_1(f)$ if it is in the image of every face of $\Delta^n$ of co-dimension $t$.  As mentioned in the introduction, this case was proven by Karasev, and we include a new proof that aligns with the techniques from previous sections.

\begin{theorem}[Karasev 2012 \cite{Karasev2012}]\label{thm:topo-rado}
    Let $d,t$ be positive integers and $k$ a non-negative integer.  Let $n=t(k+1)$, $\Delta^n$ be an $n$-dimensional simplex, and $f:\Delta^n \to \rr^d$ be a continuous map.  If $\dim (T_1(f))= \dim f(\Delta^n) \le k$, then $C^t_1(f) \neq \emptyset$.
\end{theorem}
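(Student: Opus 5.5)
We argue by contradiction and obtain a retraction of $\Delta^n$ onto its boundary, following the fiber-cover construction in the proof of \cref{lem:higher-fiber-cover}. Let $x_1,\dots,x_{n+1}$ be the vertices of $\Delta^n$, with $n=t(k+1)$. Let $O_i\subset\Delta^n$ be the open set of points whose barycentric coordinate at $x_i$ is positive, and let $Y=f(\Delta^n)$. The set $Y$ is compact metric with $\dim Y\le k$.

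Suppose $C^t_1(f)=\emptyset$. Then for every $y\in Y$ there is a face of codimension $t$ whose image misses $y$. Equivalently, there is a set $I_y$ of $t$ vertices such that every point of $f^{-1}(y)$ has a positive coordinate at some $x_i$ with $i\in I_y$, i.e.\ $f^{-1}(y)\subset\bigcup_{i\in I_y}O_i$.

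This is exactly the hypothesis of \cref{lem:higher-fiber-cover}, with $\rho=f$. We repeat the first half of its proof verbatim, ignoring the cohomology class.
\begin{itemize}
\item Each $y$ has an open neighborhood $U_y$ with $f^{-1}(U_y)\subset\bigcup_{i\in I_y}O_i$.
\item Take a finite refinement $\{V_\alpha\}$ of the cover $\{U_y\}$ of order at most $k+1$, and set $W_{\alpha,i}=f^{-1}(V_\alpha)\cap O_i$ for $i\in I_\alpha$.
\end{itemize}
This gives a finite open cover $\mathcal W$ of $\Delta^n$ in which each member carries a label $i$ with $W_{\alpha,i}\subset O_i$. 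Moreover, every point lies in at most $t(k+1)=n$ members of $\mathcal W$, since $f(x)$ lies in at most $k+1$ sets $V_\alpha$ and each contributes at most $t$ members.

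Next, choose a partition of unity $\{\varphi_{\alpha,i}\}$ subordinate to $\mathcal W$, so that $\varphi_{\alpha,i}(x)>0$ implies $x\in W_{\alpha,i}\subset O_i$. Define $g:\Delta^n\to\Delta^n$ by
\[
g(x)=\sum_{\alpha,i}\varphi_{\alpha,i}(x)\,x_i .
\]
The map $g$ has two key properties.
\begin{itemize}
\item \textbf{$g$ lands in $\partial\Delta^n$.} At most $n$ of the functions $\varphi_{\alpha,i}$ are positive at $x$, so at most $n$ distinct vertices appear in $g(x)$. Hence $g(x)$ lies in a proper face of $\Delta^n$.
\item \textbf{$g$ preserves faces.} If $x$ lies in a face $F$, then $x\notin O_i$ for every vertex $x_i\notin F$. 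So only labels $i$ with $x_i\in F$ contribute, and $g(x)\in F$.
\end{itemize}

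Because $g$ maps every face of $\partial\Delta^n$ into itself, the straight-line homotopy $(1-s)x+s\,g(x)$ stays inside each face. It therefore stays in $\partial\Delta^n$, and shows that $g|_{\partial\Delta^n}$ is homotopic to the identity of $\partial\Delta^n\cong S^{n-1}$. But $g|_{\partial\Delta^n}$ extends to the map $g:\Delta^n\to\partial\Delta^n$, so it is null-homotopic. This contradicts the fact that the identity of $S^{n-1}$ has degree $1$, i.e.\ the no-retraction theorem.

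The step needing the most care is the order count together with the face-preservation property. The partition of unity must be genuinely subordinate, with supports inside $\mathcal W$, so that positivity of $\varphi_{\alpha,i}$ at $x$ forces $x\in O_i$. The order bound must be the count of members of $\mathcal W$ containing a point, which is at most $n$, rather than the nerve dimension. Once these two points are secured, the contradiction is immediate, and hence $C^t_1(f)\neq\emptyset$.
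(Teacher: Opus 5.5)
Your proposal is correct and follows essentially the same argument as the paper: the same fiber cover $W_{\alpha,i}=f^{-1}(V_\alpha)\cap O_i$ of order at most $n$, and the same face-preserving partition-of-unity map into $\partial\Delta^n$, ending in a no-retraction contradiction. Your straight-line homotopy simply makes explicit the paper's brief remark that a face-preserving self-map of $\Delta^n$ has degree $1$ on the boundary.
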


\begin{proof}
Let \(V=\{x_1,\dots,x_{n+1}\}\) be the vertex set of \(\Delta^n\), and consider \(Y=f(\Delta^n)\).
By assumption, \(\dim Y\le k\).  Let \(\lambda_i:\Delta^n\to \rr\) denote the
barycentric coordinate corresponding to the vertex \(x_i\), and define the set \(O_i=\{z\in \Delta^n:\lambda_i(z)>0\}\), which is simply the complement of the facet opposite to vertex $i$.

Suppose, for contradiction, that \(C_1^t(f)=\emptyset\).
Then for every \(p\in Y\), there is a \(t\)-element set of vertices \(A_p\subset V\) such that \(p\) is not in the image under \(f\) of the face spanned by
\(V\setminus A_p\).  Equivalently, \(f^{-1}(p)\subset \bigcup_{x_i\in A_p}O_i\).

Since \(f^{-1}(p)\) is compact and \(\bigcup_{x_i\in A_p}O_i\) is open, there is
an open neighborhood \(U_p\subset Y\) of \(p\) such that \(f^{-1}(U_p)\subset \bigcup_{x_i\in A_p}O_i\).
The sets \(U_p\) cover \(Y\). Since \(\dim Y\le k\), after passing to a finite
refinement, we may choose a finite open cover
\[
\mathcal V=\{V_\alpha\}_{\alpha\in I}
\]
of \(Y\) of order at most \(k+1\), such that each \(V_\alpha\) is contained in
some \(U_{p(\alpha)}\).

For each \(\alpha\in I\) and each \(x_i\in A_{p(\alpha)}\), define
\[
W_{\alpha,i}=f^{-1}(V_\alpha)\cap O_i.
\]
The family
\(\mathcal W=\{W_{\alpha,i}:\alpha\in I,\ x_i\in A_{p(\alpha)}\}\) is an open cover of \(\Delta^n\).

The order of \(\mathcal W\) is at most \(t(k+1)=n\).

Choose a partition of unity \(\{\varphi_{\alpha,i}\}\)
subordinate to the cover \(\mathcal W\).  Consider the continuous map
\begin{align*}
h:\Delta^n &\to \Delta^n \\
z & \mapsto \sum_{\alpha,i}\varphi_{\alpha,i}(z)x_i.
\end{align*}

Since \(W_{\alpha,i}\subset O_i\), if \(\lambda_i(z)=0\), then
\(z\notin O_i\), and in particular \(z\not\in W_{\alpha,i}\).
Therefore the coefficient of \(x_i\) in \(h(z)\) is zero whenever the coefficient of
\(x_i\) in \(z\) is zero.  Consequently, \(h\) maps every face of \(\Delta^n\) to itself.  This means that $h$ is of degree $1$ on the boundary and therefore surjective.

On the other hand, since the order of \(\mathcal W\) is at most \(n\), at every
point \(z\in\Delta^n\) at most \(n\) of the functions \(\varphi_{\alpha,i}(z)\)
are nonzero.  Therefore \(h(z)\) is always a convex combination of at most \(n\)
vertices of \(\Delta^n\).  Since \(\Delta^n\) has \(n+1\) vertices, this implies \(h(\Delta^n)\subset \partial\Delta^n\), a contradiction.
\end{proof}

\section{Proof of \cref{thm:cascade-case}}\label{sec:conj-confirmed-case}

To prove \cref{thm:cascade-case}, we first prove the following theorem.  The case $t=1$ of the theorem below is the case $r=2, k=0$ of \cref{conj:kalai-simplified}.

\begin{theorem}\label{thm:confirm-kalai-simple}
     Let $d$ be a positive integer and $t$ be a non-negative integer.  If $S$ is a set of $d+t+2$ points in $\rr^d$ and $\dim(T_{2}(S)) \le 0$, then $S$ has at least one Tverberg $(2+t)$-partition.
\end{theorem}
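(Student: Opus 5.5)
The plan is to first use the hypothesis $\dim T_2(S)\le 0$ to show that every Radon partition of $S$ passes through one common point $p$. I would then turn this rigidity into a linear-algebra statement about affine dependences, and finally count dimensions to produce $t+2$ disjoint parts whose convex hulls contain $p$. The case $t=0$ is just Radon's theorem, so assume $t\ge 1$.

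\textbf{Step 1: all Radon partitions share one point.} Note that $T_2(S)$ is a finite union of the convex sets $\conv(A)\cap\conv(B)$, one for each partition $(A,B)$ of $S$. So $\dim T_2(S)\le 0$ means $T_2(S)$ is a finite set of points, and it is non-empty since $|S|\ge d+2$. As in \cref{sec:radon}, let $X$ be the $\ell_1$-sphere of radius $2$ in the space of affine dependences of $S=\{x_1,\dots,x_{d+t+2}\}$. That space has dimension at least $t+1$, so $X$ is a sphere of dimension at least $t\ge 1$, and in particular $X$ is connected. The map $\Phi(\alpha)=\sum_i(\alpha_i)_+x_i$ is continuous from $X$ onto the finite set $T_2(S)$, so it is constant. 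Hence $T_2(S)=\{p\}$ for a single point $p$. For comparison, \cref{thm:main} with $r=2$, $k=0$ gives $C^t_2(S)\neq\emptyset$, hence $p\in C^t_2(S)$: $p$ remains a Radon point after deleting any $t$ points. This is useful information but not yet the desired $(t+2)$-partition.

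\textbf{Step 2: translate into linear algebra.} Put $y_i=x_i-p$, and let $K$ be the space of affine dependences of $S$. Since every $\alpha\in K$ has Radon point $p$, scaling gives
\[
\sum_i(\alpha_i)_+\,y_i=0 \quad\text{and}\quad \sum_i(\alpha_i)_-\,y_i=0 \qquad \text{for all } \alpha\in K.
\]
Let $J=\{i:y_i=0\}$ be the indices of points of $S$ equal to $p$; each of them can serve as a singleton part on its own. Now perturb: take a generic $\alpha\in K$ and any $\beta\in K$. For small $\varepsilon$, the positive support of $\alpha+\varepsilon\beta$ equals that of $\alpha$, and the identity above is linear in $\varepsilon$. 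This yields $\sum_{i\in P}\beta_i y_i=0$ for every $\beta\in K$, where $P=\operatorname{supp}^+(\alpha)$. Iterating over chambers of the hyperplane arrangement cut out by the coordinate hyperplanes in $K$, I would show that the index set outside $J$ splits into blocks $P_1,\dots,P_s$ with the following properties: $\sum_{i\in P_j}\beta_iy_i=0$ for all $\beta\in K$; each block consists of points having $p$ in the relative interior of their convex hull; and $K$ is, up to the contribution of $J$, a direct sum of the dependences supported on the individual blocks.

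\textbf{Step 3: dimension count.} A block $P_j$ whose points span an affine space of dimension $d_j$ around $p$ contributes at most $|P_j|-d_j-1$ to $\dim K$. Comparing with $\dim K\ge t+1$ and $\sum_j|P_j|+|J|=d+t+2$, the goal is to show that the number of disjoint sets containing $p$ in their convex hull, namely $|J|$ singletons together with the parts obtained by splitting each block, is at least $t+2$. Each block should split into at least $|P_j|-d_j$ parts, by an iterated Carathéodory argument inside the span of the block, which is possible because of the strong linear relations from Step 2. Any leftover points are added to an arbitrary part.

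The main obstacle is Step 2 together with the counting in Step 3, that is, rigorously extracting the block decomposition from the fact that the map $\alpha\mapsto\sum_i(\alpha_i)_+y_i$ vanishes on all of $K$. My first attempt would be the chamber-by-chamber perturbation argument, combined with induction on $t$: delete one point $x_i\neq p$, for which $p\in C^1_2(S)$ guarantees that $p$ is still the unique Radon point, and then extend the resulting $(t+1)$-partition.
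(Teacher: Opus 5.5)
Your Step 1 is the paper's argument, and the perturbation observation that opens Step 2 is also where the paper starts: on each chamber of the coordinate arrangement in $K$, the identity $\sum_i(\alpha_i)_+y_i=0$ is linear and hence extends to all of $K$. There is a genuine gap after that. The block decomposition is only announced, and the counting you plan to do with it is wrong as formulated.

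Affine dependences of $S$ do not decompose as a direct sum of affine dependences supported on single blocks, so a block cannot be charged $|P_j|-d_j-1$. Take $S=\{\pm e_1,\pm e_2,\pm e_3\}\subset\rr^3$, with $d=3$ and $t=1$. Then $T_2(S)=\{0\}$, $J=\emptyset$, and the blocks $\{e_j,-e_j\}$ each support no nonzero affine dependence. Yet $K=\{(a,a,b,b,c,c):a+b+c=0\}$ has dimension $2$, and every dependence mixes blocks. The bookkeeping would have to use linear dependences of the $y_i$ (a space of dimension $\dim K+1$), and you would still need to prove that the blocks exhaust it.

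Likewise, splitting a block into $|P_j|-d_j$ parts does not follow from $p$ lying in its relative interior. A regular pentagon centred at $p$ has $|P|-d=3$ but no two disjoint subsets with $p$ in their hulls. So this step needs an argument from the Radon rigidity, which you do not supply. Your fallback induction on $t$ has the same hole: a $(t+1)$-partition of $S\setminus\{x_i\}$ need not extend, and the extension is the whole difficulty.

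The missing idea is how to extract per-block information from the chamber identities. The paper defines blocks as classes of indices whose coordinate functionals on $K$ are proportional, $\lambda_i=c_i\varphi_j$ for $i\in B_j$. It picks a point of $\ker\varphi_j$ lying on no other coordinate hyperplane and compares the two chambers adjacent across this wall. Only the signs inside $B_j$ flip, and both identities hold on all of $K$. Subtracting them gives $\varphi_j(\alpha)\sum_{i\in B_j}|c_i|y_i=0$, so $p\in\conv\{x_i:i\in B_j\}$.

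The count is then immediate. The $\lambda_i$ span $K^*$, so the $\varphi_j$ span $K^*$ and $s\ge\dim K\ge t+1$. If $s\ge t+2$, the blocks are the parts. If $s=t+1$, the $\varphi_j$ form a basis. Then $\sum_i\alpha_i=0$ and $\sum_i\alpha_iy_i=0$ give $\sum_{i\in B_1}c_i=0$ and $\sum_{i\in B_1}c_iy_i=0$. Together with $\sum_{i\in B_1}|c_i|y_i=0$, these split $B_1$ into $B_1^+$ and $B_1^-$, each with $p$ in its hull.
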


\begin{proof}
The case $t=0$ is Radon's theorem, so we assume $t>0$.  We repeat some arguments from \cref{sec:radon}. Since the number of points is greater than $d+1$, we know $T_2(S) \neq \emptyset$, so we have $\dim T_2(S) = 0$. The first step of the proof will be to show that the set of Radon points is connected, and therefore a single point.  We may assume without loss of generality that \(S\) affinely spans \(\mathbb R^d\).  Let $N = d+t+2$.

Let
\[
W=
\left\{
\alpha=(\alpha_1,\ldots,\alpha_N)\in\mathbb R^N:
\sum_{i=1}^N \alpha_i=0,\quad
\sum_{i=1}^N \alpha_i x_i=0
\right\}
\]
be the space of affine dependences among the points of \(S\). Since \(S\) affinely spans \(\mathbb R^d\), the augmented vectors \((x_i,1)\in\mathbb R^{d+1}\) have rank \(d+1\). In particular, \(\dim W=N-(d+1)\ge t+1\).

Consider the \(\ell^1\)-unit sphere in \(W\), normalized by
\[
X=
\left\{
\alpha\in W:
\sum_{i=1}^N |\alpha_i|=2
\right\}.
\]
Since \(\dim W\ge 2\), the space \(X\) is connected.

For \(\alpha\in X\), define \(P(\alpha)=\{i:\alpha_i>0\}\)
 and \(Q(\alpha)=\{i:\alpha_i<0\}\).  This is a Radon partition.  We also have a continuous function $\alpha \mapsto \sum_{i=1}^N \alpha_i^+ x_i$, where \(\alpha_i^+=\max\{\alpha_i,0\}\).  The image of this function is $T_2(S)$.

%define \(\alpha_i^+=\max\{\alpha_i,0\}\). Since \(\sum_i\alpha_i=0\) and \(\sum_i|\alpha_i|=2\), we have \(\sum_{i=1}^N \alpha_i^+=1\).
%Define
%\[
%\Phi:X\to \mathbb R^d,\qquad
%\Phi(\alpha)=\sum_{i=1}^N \alpha_i^+ x_i.
%\]
%If
%\[
%P(\alpha)=\{i:\alpha_i>0\},
%\qquad
%Q(\alpha)=\{i:\alpha_i<0\},
%\]
%then
%\[
%\Phi(\alpha)
%=
%\sum_{i\in P(\alpha)}\alpha_i x_i
%=
%\sum_{i\in Q(\alpha)}(-\alpha_i)x_i.
%\]
%Therefore \(\Phi(\alpha)\) is the Radon point corresponding to the Radon partition
%\[
%P(\alpha),Q(\alpha).
%\]
Every Radon point of \(S\) arises in this way from some normalized affine dependence. This implies that $T_2(S)$ is connected.  Since it has dimension \(0\), it consists of a single point.  We assume without loss of generality that \(T_2(S)=\{0\}\).

%The space \(X\) is connected, so \(T_2(S)\) is connected. Since \(\dim T_2(S)\le 0\), the compact connected set \(T_2(S)\) consists of a single point. Let
%\[
%T_2(S)=\{p\}.
%\]
%Translating \(S\) by \(-p\), we may assume from now on that
%\[
%T_2(S)=\{0\}.
%\]

Every affine dependence among the points of \(S\) has Radon point \(0\). Equivalently, if \(\alpha\in W\), then \(\sum_{\alpha_i>0}\alpha_i x_i=0\)
and \(\sum_{\alpha_i<0}(-\alpha_i)x_i=0\).

For each \(i\), let
\[
\lambda_i:W\to\mathbb R,\qquad
\lambda_i(\alpha)=\alpha_i
\]
be the \(i\)-th coordinate functional. Ignore indices \(i\) for which \(\lambda_i\equiv 0\); such vertices never appear in any affine dependence and may be assigned to any part at the end.

Partition the remaining indices into blocks \(B_1,\ldots,B_s\)
according to proportionality of the nonzero coordinate functionals.  Two indices \(i,j\) are in the same block if and only if there is a nonzero constant \(c\) such that \(\alpha_i=c\alpha_j\) for every affine dependence \(\alpha\in W\).  A block is a maximal set of indices which are locked together in fixed ratios across all affine dependences.  

For each block \(B_j\), there is a nonzero linear functional \(\varphi_j\in W^*\)
and nonzero constants \(c_i\) such that
\(
\lambda_i=c_i\varphi_j\)
{for all } \(i\in B_j\).

\begin{claim}\label{claim:blocks}
    For each block $B_j$ we have \(0 \in \conv \{x_i: i \in B_j\}\).
\end{claim}

We first prove the claim above.  Fix a block \(B_j\). Let
\(H_j=\ker \varphi_j\).
Choose a point \(\beta\in H_j\) which lies in no other coordinate hyperplane \(\ker\lambda_i\) with \(i\notin B_j\). This is possible because the other coordinate hyperplanes do not contain \(H_j\).

Take two nearby points \(\alpha^+,\alpha^-\in W\) on opposite sides of \(H_j\), close enough so that the signs of all coordinates outside \(B_j\) are the same for \(\alpha^+\) and \(\alpha^-\) (in other words, two neighboring cells of the hyperplane arrangement generated by the kernels). Let \(E\) be the common set of positive indices outside \(B_j\).

On the side where \(\varphi_j>0\), the positive indices inside \(B_j\) are precisely those with \(c_i>0\). Since every Radon point is \(0\), we have the linear identity
\[
\sum_{i\in E}\lambda_i(\alpha)x_i
+
\varphi_j(\alpha)\sum_{\substack{i\in B_j\\ c_i>0}}c_i x_i
=
0.
\]
On the side where \(\varphi_j<0\), the positive indices inside \(B_j\) are precisely those with \(c_i<0\). Again using that every Radon point is \(0\), we obtain
\[
\sum_{i\in E}\lambda_i(\alpha)x_i
+
\varphi_j(\alpha)\sum_{\substack{i\in B_j\\ c_i<0}}c_i x_i
=
0.
\]
Both identities hold on open subsets of \(W\), hence hold as linear identities on all of \(W\). Subtracting them gives
\[
\varphi_j(\alpha)
\sum_{i\in B_j}|c_i|x_i
=
0
\]
for all \(\alpha\in W\). Since \(\varphi_j\neq 0\), it follows that \(\sum_{i\in B_j}|c_i|x_i=0\).  All coefficients \(|c_i|\) are positive. Therefore \(0\in \operatorname{conv}\{x_i:i\in B_j\}\) as we wanted to show.

If \(s\), the number of blocks, is greater than or equal to $t+2$, then the disjoint non-empty sets
\[
B_1,\quad B_2, \quad \dots, \quad B_{t+2}
\]
already have convex hulls containing \(0\). They form a Tverberg \((t+2)\)-partition, after assigning any ignored or unused vertices arbitrarily to one of the parts.

It remains to handle the case \(s\le t+1\). The coordinate functionals \(\lambda_i\) span \(W^*\), since the only vector in \(W\) annihilated by all coordinates is the zero vector. Therefore the lines spanned by \(\varphi_1,\ldots,\varphi_s\) span \(W^*\). Since \(\dim W\ge t+1\), we cannot have \(s=t\). This implies that \(s=t+1\)
and \(\dim W=t+1\).
In particular, \(\varphi_1,\varphi_2, \dots, \varphi_{t+1}\) are linearly independent.

For \(j=1,\dots,t+1\), write
\(
\lambda_i=c_i\varphi_j\) for \(i\in B_j\).  Since every \(\alpha\in W\) satisfies \(\sum_{i=1}^N\alpha_i=0\),
we get
\[
\varphi_1(\alpha)\sum_{i\in B_1}c_i
+ \dots + 
\varphi_{t+1}(\alpha)\sum_{i\in B_{t+1}}c_i
=
0
\]
for all \(\alpha\in W\). Since \(\varphi_1,\dots,\varphi_{t+1}\) are independent,
\(
\sum_{i\in B_j}c_i=0\) for \(j\in[t+1]\).
Similarly, from
\(
\sum_{i=1}^N\alpha_i x_i=0\)
for all \(\alpha\in W\), we obtain
\(
\sum_{i\in B_j}c_i x_i=0\)
 {for }\(j\in[t+1]\).

Now consider \(B_1\). Split it into the two sets
\[
B_1^+=\{i\in B_1:c_i>0\},
\qquad
B_1^-=\{i\in B_1:c_i<0\}.
\]
Both sets are non-empty because \(\sum_{i\in B_1}c_i=0 \)
and all \(c_i\neq 0\).

We have two equations for $B_1$.  First, \(\sum_{i\in B_1}c_i x_i=0\) and second, as we proved before, \(\sum_{i\in B_1}|c_i|x_i=0\).  This implies that

\[
\sum_{i\in B_1^+}c_i x_i = \sum_{i\in B_1^-}(-c_i)x_i = 0.
\]
In particular, we have that $0$ is in both $\conv B_1^+$ and \(\conv B_1^-\).  By \cref{claim:blocks} we also have $0 \in \conv \{x_i: i \in B_2\}$.
%\[
%0\in \operatorname{conv}\{x_i:i\in B_1^+\}
%\]
%and
%\[
%0\in \operatorname{conv}\{x_i:i\in B_1^-\}.
%\]
%We also know that
%\[
%0\in \operatorname{conv}\{x_i:i\in B_2\}.
%\]
We obtain a Tverberg $(t+2)$-partition induced by the pairwise disjoint non-empty sets
\[
B_1^+,\qquad B_1^-\qquad, B_2 \quad ,\dots, \quad B_{t+1}.
\]
\end{proof}

Notice that in the second part of the proof, when there are only $t+1$ blocks, we actually find a Tverberg $2(t+1)$-partition with the sets \(B_1^+, B_1^-,\dots,B_{t+1}^+, B_{t+1}^-\) by applying the same arguments to $B_2,\dots, B_{t+1}$.

\begin{proof}[Proof of \cref{thm:cascade-case}]
Let $a$ be the dimension of the affine span of $S$.  If $|S| \le a+1$, then \(\sum_{r=1}^{|S|} \dim(T_r(S)) \ge a + a(-1)\ge 0\).  Since $|S| \ge a+2$, by Radon's theorem there exists at least one Radon partition.   \cref{thm:confirm-kalai-simple} implies we have Tverberg partitions with at least $|S|-a$ parts.  The dimension of $T_1(S)$ is $a$, and in the sum $\sum_{r=1}^{|S|} \dim(T_r(S))$ there are at most $a$ negative terms, so $\sum_{r=1}^{|S|} \dim(T_r(S)) \ge a + a(-1) \ge 0$.    
\end{proof}

\section{Remarks}\label{sec:remarks}

As mentioned in the introduction, if we apply \cref{thm:main} with $k=d$, we obtain a non-empty set $C^1_r(f)$.  As long as $r+1$ is not a prime power, it is possible that $T_{r+1}(f)=\emptyset$.  In other words, since the topological Tverberg theorem fails, modifications to the conclusion are unavoidable.

Unfortunately, our results do not imply the full cascade conjecture for linear maps.  For the case $r=2, t=1$, the proof of \cref{thm:main} gives us slightly more structure on the Radon partitions than just $C^1_2(S) \neq \emptyset$.  It shows that there exists a Radon partition $(A,B)$ such that by iteratively
\begin{itemize}
    \item adding points of $S \setminus (A \cup B)$ to either $A$ or $B$ or
    \item removing points from $A$ or $B$,
\end{itemize}
we can reach $(B,A)$ while at each step the pair of sets is a Radon partition with the same Radon point.  Being able to ``flip'' a Radon partition this way does not imply the existence of a Tverberg $3$-partition, as the following example shows.

Let $e_1, \dots, e_5$ be the canonical basis of $\rr^5$, and let $f_i = -e_i -e_{i+2}$ for $i=1,\dots, 5$, with the indices considered modulo $5$.  Let $S = \{e_1,\dots,e_5, f_1,\dots, f_5\}$ and $X_i = \{e_i, e_{i+2}, f_i\}$ for $i=1,\dots,5$.

Notice that $0 \in \conv X_i$ for all $i$.  Moreover, $X_i \cap X_{i+1}= \emptyset$ for all $i$.  We can go from the Radon partition $(X_1,X_2)$ by performing the steps
\begin{align*}
(X_1,X_2) \to (X_1 \cup X_3, X_2) \to (X_3, X_2) \to (X_3, X_2 \cup X_4) \to & (X_3,X_4) \to \\
 (X_3 \cup X_5, X_4) \to (X_5,X_4) \to (X_5, X_4\cup X_1) \to &(X_5,X_1) \to \\ (X_5 \cup X_2, X_1) \to &(X_2,X_1).    
\end{align*}

However, $S$ does not have any Tverberg $3$-partitions.  To show this, suppose $p \in T_3(S)$.  If $p$ has a non-zero coordinate, notice that there are at most two points of $S$ with the same sign on that coordinate, which contradicts the fact that $p$ is in a Tverberg $3$-partition.

If $p =0$, suppose that $A,B,C$ is a Tverberg $3$-partition witnessing $0$.  One of the three sets $A,B,C$ has at most one element of the form $e_i$.  Therefore, it cannot have any of the $f_j$, as it would need to have both $e_j$ and $e_{j+2}$ to cancel both coordinates.  This means that the set cannot have $0$ in its convex hull.

The reverse direction is straightforward: the existence of a Tverberg $3$-partition implies the possibility of flipping a Radon partition.  Understanding the gap between these conditions is an interesting problem.  Indeed, if $(A,B,C)$ is a Tverberg $3$-partition of $S$, then we can take the sequence of Radon partitions

\begin{align*}
    (A,B) \to (A, B \cup C) \to (A, C) \to (A \cup B, C) \to (B, C) \to (B, A\cup C) \to (B,A).
\end{align*}

\begin{problem}
    Determine conditions needed on a set $S \subset \rr^d$ so that $C^1_r(S) \neq \emptyset$ implies $T_{r+1}(S) \neq \emptyset$.
\end{problem}

More generally,

\begin{problem}
Under what additional assumptions can the non-emptiness of \(C_r^t(S)\) be upgraded to the existence of a Tverberg \((r+t)\)-partition of \(S\)?
\end{problem}

\subsection{Tverberg with tolerance}

The notion of $C^t_r(f)$ is similar to Tverberg partitions with tolerance \cites{Soberon:2018gn, GarciaColin:2017id}, in which we seek a partition $A_1,\dots, A_r$ of a set $S$ such that
\[
\bigcap_{j=1}^r \conv (A_j \setminus S') \neq \emptyset \qquad \mbox{for all }S' \in \binom{S}{t}.
\]
In other words, the partition remains a Tverberg partition after any $t$ points of $S$ are removed.  Note that in $C^t_r(f)$, the point of overlap is constant but the partition may change.  For Tverberg with tolerance, the partition remains constant but the point of overlap may depend on the deleted point.

Since bounding the dimension of Tverberg sets implies properties of $C^t_r(f)$, it might be possible that similar geometric properties on $T_r(f)$ have consequences for Tverberg partitions with tolerance.

Since our results work on the topological level, studying the topological versions of Tverberg's theorem with tolerance is also interesting.

\subsection{On the number of extra vertices}

The term \(t(k+1)\) in \cref{thm:main} is optimal in simple examples.  Let us
explain this in the linear case.  Suppose first that \(d=k=1\), and let
\[
S=\{x_1<x_2<\cdots <x_{n+1}\}\subset \rr
\]
be a set of \(n+1\) distinct points on the line.  Then \(T_r(S)=[x_r,x_{n-r+2}]\) whenever \(n+1\ge 2r-1\).  As long as this interval is non-degenerate,
\(\dim T_r(S)=1\).

Now consider the higher core \(C_r^t(S)\).  If we delete \(t\) points from the
left, the left endpoint of the remaining \(r\)-Tverberg interval can move as far
right as \(x_{r+t}\).  If we delete \(t\) points from the right, the right endpoint
can move as far left as \(x_{n-r-t+2}\).  Therefore
\(C_r^t(S) = [x_{r+t},x_{n-r-t+2}]\). 

This interval is non-empty if and only if
\(r+t\le n-r-t+2\),
or equivalently \(n\ge 2r+2t-2= 2(r-1)+2t\).  This matches the corresponding value of $n$ for \cref{thm:main}.

Another interesting case is when \(r=1\) and $f$ is linear.  As mentioned in \cref{sec:rado}, in this case,
\(C_1^t(S)\) is the set of points that remain in the convex hull of \(S\) after
deleting any \(t\) points.  Equivalently, these are points of Tukey depth at least
\(t+1\) with respect to \(S\).  Rado's centerpoint theorem says that there exist points of depth greater than $t$ as long as
\[
|S|\ge t(k+1)+1,
\]
which is precisely what \cref{thm:main} gives when \(r=1\).  Rado's centerpoint theorem is known to be optimal.

%\subsection{Removing requirements on $r$ for linear maps.}

%\cref{thm:main} requires $r$ to be a prime power.  This is not unexpected, as it is a necessary condition for the topological Tverberg theorem.  However, since Tverberg's theorem holds without conditions on the parameters, it is natural to ask if such conditions can be removed from \cref{thm:main} when $f$ is linear.

%In this subsection we show a weaker version that indicates that this should be the case.  The biggest change from the theorem below and \cref{thm:main} is replacing the condition $\dim T_r(S)\le k$ by $\dim (\conv (T_r(S))) \le k$.  The cascade conjecture remains open even for this case, so the result below is still providing new structural information.

%\begin{theorem}
 %   Let $r,d,k,t$ be positive integers.  Let $S$ be a set of $(r-1)(d+1)+t(k+1)+1$ points in $\rr^d$.  If $\dim (\conv ( T_r(S))) \le k$, then $C^t_r(S) \neq \emptyset$.
%\end{theorem}

%\begin{proof}
 %   For each set $A \in \binom{S}{t}$, let $K_A = \conv T_r(S \setminus A)$.  We have $K_A \subset \conv T_r(S)$, so all $K_A$ are contained in a common $k$-dimensional affine subspace of $\rr^d$.

  %  For any $k+1$ sets $K_{A_1},\dots, K_{A_{k+1}}$ we have \(S \setminus (A_1 \cup \dots \cup A_{k+1}) \ge (r-1)(d+1)+1\).  Therefore, we can apply Tverberg's theorem to deduce $T_r(S \setminus (A_1 \cup \dots \cup A_{k+1})) \neq \emptyset$.  We also have
   % \[
    %T_r(S \setminus (A_1 \cup \dots \cup A_{k+1})) \subset \bigcap_{j=1}^{n+1}
    %\]

%\end{proof}

\subsection{Acknowledgments.}  The author thanks Gil Kalai, Nikola Sadovek, and Pavle Blagojevi\'c for helpful discussions on this topic.  The development of this paper involved LLM-based tools in the brainstorming stage.  All arguments were checked by the author.

% \bib, bibdiv, biblist are defined by the amsrefs package.

\end{document}